\newcommand{\ord}{\operatorname{ord}}
\newtheorem{thm}{Theorem}[section]
\newtheorem{lem}[thm]{Lemma}
\newtheorem{prop}[thm]{Proposition}
\newtheorem{cor}[thm]{Corollary}
\theoremstyle{remark}
\theoremstyle{definition}
    \newtheorem{defn}[thm]{Definition}
\newtheorem{rem}[thm]{Remark}
\theoremstyle{THM}
\DeclareMathOperator{\lcm}{lcm}
\newcommand{\abs}[1]{\left|{#1}\right|}
\def\P {{\mathcal P}}
\def\O {{\mathcal O}}
\def\Z {{\mathbb Z}}
\def\NN {{\mathcal N}}
\def\Q {{\mathbb Q}}
\def\P {{\mathcal P}}
\def\F {{\mathbb F}}
\def\Z {{\mathbb Z}}
\def\Q {{\mathbb Q}}
\def\Gal{{\mbox {Gal} }}
\newcommand{\OK}{\mathcal{O}}
\newcommand{\ddiv}{\, \mid \,}
\newcommand{\ndiv}{\, \nmid \,}
\newcommand{\Mod}[1]{\ (\mathrm{mod}\ #1)}
\def\red#1 {\textcolor{red}{#1 }}
\def\blue#1 {\textcolor{blue}{#1 }}
\numberwithin{equation}{section}
\def\Z {{\mathbb Z}}
\begin{document}

\title[Monogenic trinomials with non-squarefree discriminant]{Monogenic trinomials with non-squarefree discriminant}
\author{Lenny Jones}
\address{Shippensburg University, Department of Mathematics, 1871 Old Main Drive, Shippensburg, PA 17257}
\email{lkjone@ship.edu}
\author{Daniel White}
\address{Bryn Mawr College, Department of Mathematics, 101 North Merion Avenue, Bryn Mawr, PA 19010}
\email{dfwhite@brynmawr.edu}

\begin{abstract}
For each integer $n\ge 2$, we identify new infinite families of monogenic trinomials $f(x)=x^n+Ax^m+B$ with non-squarefree discriminant, many of which have small Galois group. Moreover, in certain situations when $A=B\ge 2$ with fixed $n$ and $m$, we produce asymptotics on the number of such trinomials with $A\le X$.
\end{abstract}

\subjclass[2010]{Primary 11R04, Secondary 11R09, 11R32, 12F12}
\keywords{monogenic, irreducible, trinomial, Galois group}

\maketitle

\section{Introduction}

Unless stated otherwise, polynomials $f(x)\in \Z[x]$ are assumed to be monic, and when we say $f(x)$ is ``irreducible", we mean irreducible over $\Q$.
Let $K$ be an algebraic number field of degree $n$ over $\Q$. For any $\theta\in K$, we let
\[\Delta(\theta):=\Delta\left(1,\theta,\theta^2,\ldots,\theta^{n-1}\right)\] denote the discriminant of $\theta$. Similarly, we let $\Delta(f)$ and $\Delta(K)$ denote the discriminants over $\Q$, respectively, of the polynomial $f(x)$ and the field $K$. If $f(x)$ is irreducible, with $f(\theta)=0$ and $K=\Q(\theta)$, then we have the well-known equation \cite{Cohen}
\begin{equation}
\label{Eq:Dis-Dis}
\Delta(f)=\Delta(\theta)=\left[\OK_K:\Z[\theta]\right]^2\Delta(K),
\end{equation}
where $\OK_K$ is the ring of integers of $K$.
Recall that $K$ is called \emph{monogenic} if there exists some element $\theta\in \OK_K$ such that $\OK_K=\Z[\theta]$. In other words,
$\left\{1,\theta, \theta^2,\ldots ,\theta^{n-1} \right\}$ is a \emph{power integral basis} for $\OK_K$. One advantage of the monogenic situation is that arithmetic calculations in $\OK_K$ are then much easier. One could argue that quadratic and cyclotomic fields are, in part, better understood because they are monogenic. For more information on general monogenic fields, see \cite{N}. For some recent specific examinations of monogenic fields, see
\cite{ANHam,ANHus,Gassert,BMT,ESW,Gaal,GR2,GR1,JK,S1}.

We see from \eqref{Eq:Dis-Dis} that $K$  being monogenic is equivalent to the existence of some irreducible polynomial $f(x)$, with $f(\theta)=0$ and $K=\Q(\theta)$, such that $\Delta(f)=\Delta(K)$. Therefore, given an irreducible polynomial $f(x)$ with $f(\theta)=0$, we can ask when the field $K=\Q\left(\theta\right)$ is monogenic. Note that it is possible to have $\OK_K\ne \Z\left[\theta\right]$ for such a polynomial $f(x)$ even though $K$ is monogenic. This phenomenon occurs frequently for quadratic polynomials. For a nontrivial example \cite{AW}, suppose that $f(\theta)=\theta^3-3\theta+9=0$. Then
\[\Delta(f)=\Delta(\theta)=-3^3\cdot 7\cdot 11 \ne -3\cdot 7 \cdot 11=\Delta(K),\] and hence $\left\{1,\theta,\theta^2\right\}$ is not an integral basis for $\OK_K$ since $\left[\OK_K:\Z[\theta]\right]=3$ by \eqref{Eq:Dis-Dis}. Note that $f\left(\theta^2/3\right)\ne 0$. However, it is easy to check that
\[
\left(\theta^2/3\right)^3-2\left(\theta^2/3\right)^2+\left(\theta^2/3\right)-3=0,
\]
so that $\theta^2/3\in \OK_K$. Since
\[
\Delta(\theta^2/3)=-3\cdot 7 \cdot 11=\Delta(K),
\]
it follows that $\left\{1,\theta^2/3,\theta^4/9\right\}$ is a power integral basis for $\OK_K$, and $K=\Q(\theta)$ is indeed monogenic. We are then motivated to define a \emph{monogenic polynomial} to be an irreducible polynomial $f(x)$ such that   $\OK_K=\Z\left[\theta\right]$, where $f\left(\theta\right)=0$. In this case, we also refer to the field $K=\Q(\theta)$ as the \emph{monogenic field} of $f(x)$.

By \eqref{Eq:Dis-Dis}, we see that a strategy to guarantee that $f(x)$ is monogenic is to determine conditions for which $\Delta(f)$ is squarefree. This is precisely the procedure used in \cite{BMT,K}. In \cite{BMT}, the focus is on trinomials of the form $f(x)=x^n+ax+b$, where it is proven that $\delta \le .99344674$ and  conjectured that $\delta \ge .9934466$, where $\delta$ is the density of the set of positive integers $n$ such that the discriminant of the polynomial $x^n-x-1$ is squarefree (another analysis of $\delta$ can be found in \cite{Shpar}). In \cite{K}, a more general construction of polynomials with squarefree discriminant is given without regard to a specific form of the polynomials. This construction is achieved using methods found in \cite{Helfgott}, and it establishes the existence of infinitely many monic polynomials of a given degree with squarefree discriminant.

In \cite{BSW} it is proven that, as the degree tends to infinity, the probability that a random monic polynomial $f(x)\in \Z[x]$ has squarefree discriminant approaches approximately $0.358232$. Certainly, a squarefree discriminant is not necessary for $f(x)$ to be monogenic, and indeed, it is shown in \cite{BSW} that the density of the monic monogenic polynomials for any fixed degree $\ge 2$ (as the coefficents grow in a prescribed way) is $1/\zeta(2) \approx 0.607927$.

More recently in \cite{LJPAMS,LJActa}, new infinite families of monogenic polynomials with non-squarefree discriminant were given. A construction similar to the one used in \cite{K} was used in \cite{LJPAMS} for prime-degree polynomials, while a completely different technique  was used in \cite{LJActa} for polynomials of arbitrary degree. In general, the polynomials produced in these articles are not trinomials.
\subsection{Main results}
 Jakhar,  Khanduja and Sangwan \cite{JKS2, JKS1} have given necessary and sufficient conditions, based solely on $n,m,A,B$, for an irreducible trinomial $f(x)=x^n+Ax^m+B$ to be monogenic. While examples are provided in \cite{JKS1} for the situation where $n=tm+u$, with $0\le u\le m-1$, and either $u=0$ or $u$ is a divisor of $m$, no indication is given as to whether there exist infinite families of such trinomials. In this article we use the conditions given in \cite{JKS1}, together with some asymptotic results (see Theorem \ref{mainasymp} and Theorem \ref{Thm:Pasten}), to construct infinite families of monogenic trinomials  with non-squarefree discriminant, where $n\ge 2$ is an integer and $m\ge 1$ is a proper divisor of $n$. We point out that the elements of the families given here are of a different form than previously examined in the literature. Our results are as follows.

\begin{thm}
\label{first_type}
Let $n\ge 2$ be an integer, $m\ge 1$ be a proper divisor of $n$, and $t=n/m$. Let $\kappa$ denote the squarefree kernel of $m$. Then there exist exactly
\begin{equation}
\label{first_asymp}
\frac{X}{\kappa \zeta(2)} \prod_{p \ddiv \kappa} \left( 1 - \frac{1}{p + 1} \right) \prod_{p \ndiv t(t-1)\kappa} \left( 1 - \frac{1}{p^2-1} \right) + O(X^{3/4})
\end{equation}
positive integers $A \le X$ such that $A \equiv 0 \pmod{\kappa}$ and $f(x)=x^{n}+Ax^{m}+A$ is monogenic with non-squarefree discriminant. 
The implied constant in \eqref{first_asymp} is dependent on $n$ and $m$.
\end{thm}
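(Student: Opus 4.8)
The plan is to determine exactly when $f$ is monogenic via the Jakhar--Khanduja--Sangwan (JKS) criterion, to observe that non-squarefreeness of $\Delta(f)$ is essentially automatic, and then to recognize the resulting arithmetic constraints as a simultaneous squarefreeness condition countable by Theorem \ref{mainasymp} and Theorem \ref{Thm:Pasten}. First I would record the discriminant. Since $m \ddiv n$ and $B=A$, the standard trinomial discriminant formula collapses to
\[
\Delta(f) = (-1)^{n(n-1)/2}\, A^{n-1}\, M^{m}, \qquad M := t^{t} - (-1)^{t}(t-1)^{t-1}A ,
\]
where $M$ is \emph{linear} in $A$ (the exponent of $A$ being $m-1+m(t-1)=n-1$; one checks this against $x^{2}+Ax+A$ and $x^{3}+Ax+A$). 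Two consequences are immediate. For $p \ddiv \kappa$ we have $p \ddiv A=B$, so the condition $p^{2}\ndiv A$ makes $f$ Eisenstein at $p$, giving both irreducibility and a totally ramified prime. Moreover the factor $M^{m}$ (when $m\ge 2$) or $A^{n-1}$ (when $m=1,\ n\ge 3$) forces $\Delta(f)$ to be non-squarefree for all but finitely many $A$, so in those cases ``monogenic with non-squarefree discriminant'' coincides with ``monogenic''; only the borderline pair $(n,m)=(2,1)$, where both $A^{n-1}$ and $M^{m}$ are linear, needs a short separate check.

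Next I would translate monogenicity into squarefreeness conditions by applying the JKS criterion prime by prime, using the elementary fact that $\gcd(A,M)\ddiv t^{t}$, so $A$ and $M$ share no prime outside those dividing $t$. I expect the following trichotomy. For $p \ddiv \kappa$, the prime divides both $A$ and $B$, and the Eisenstein/Dedekind analysis shows monogenicity at $p$ holds exactly when $p^{2}\ndiv A$. For $p \ndiv \kappa$ with $p \ddiv t(t-1)$ one finds $p \ddiv M \iff p \ddiv A$, so the sole obstruction is again $p^{2}\ndiv A$. For $p \ndiv t(t-1)\kappa$ the events $p^{2}\ddiv A$ and $p^{2}\ddiv M$ are mutually exclusive (if $p\ddiv A$ then $M\equiv t^{t}\not\equiv 0$), and monogenicity at $p$ holds precisely when $p^{2}\ndiv A$ \emph{and} $p^{2}\ndiv M$. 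Assembling these, $f$ is monogenic if and only if $A$ is squarefree and $M=M(A)$ is squarefree at every prime $p \ndiv t(t-1)\kappa$.

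With the arithmetic conditions in hand I would set up the count of $A\le X$ with $A\equiv 0 \pmod{\kappa}$, with $A$ squarefree, and with $M(A)$ squarefree away from $T:=t(t-1)\kappa$. Because $A$ and $M$ are coprime outside $T$, this is equivalent to asking that the degree-two form $h(A):=A\,M(A)$ be squarefree at all $p \ndiv T$, together with the purely local conditions at $p \ddiv T$ and the congruence $A\equiv 0 \pmod{\kappa}$. Feeding this into Theorem \ref{mainasymp}/Theorem \ref{Thm:Pasten} produces a main term equal to $X$ times a product of local densities, with error $O(X^{3/4})$. A direct computation gives density $\tfrac{p-1}{p^{2}}$ at $p \ddiv \kappa$ (forcing $v_{p}(A)=1$), $1-\tfrac{1}{p^{2}}$ at $p \ddiv t(t-1)$ with $p \ndiv \kappa$, and $1-\tfrac{2}{p^{2}}$ at $p \ndiv t(t-1)\kappa$; using $\tfrac{p-1}{p^{2}}=\tfrac1p\bigl(1-\tfrac1p\bigr)$ and $\prod_{p \ddiv \kappa}\tfrac1p=\tfrac1\kappa$, this product rearranges exactly into \eqref{first_asymp}.

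The main obstacle is this final step: securing the asymptotic with the \emph{power-saving} error $O(X^{3/4})$ for the simultaneous squarefreeness of $A$ and the linear form $M(A)$. The small prime squares contribute the local densities above, but controlling the tail — those $A\le X$ for which $p^{2}\ddiv h(A)$ for some large prime $p$ — is precisely what requires the squarefree-sieve input of Theorem \ref{Thm:Pasten} (with the bookkeeping of Theorem \ref{mainasymp}); this is where the degree-two nature of $h$ is essential and where the dependence of the implied constant on $n$ and $m$ enters. A secondary point is confirming irreducibility of $f$ across the family, which for $m\ge 2$ is free via Eisenstein at any $p \ddiv \kappa$ and for $m=1$ must be argued directly, with the reducible $A$ forming a negligible set absorbed into the error term.
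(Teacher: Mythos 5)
Your overall strategy is essentially the paper's own: use the Jakhar--Khanduja--Sangwan criterion to show that, for $A \equiv 0 \pmod{\kappa}$, monogenicity of $f$ is equivalent to simultaneous squarefreeness of $A$ and the linear form $D = t^t + (1-t)^{t-1}A$ (the paper packages this as Lemma \ref{Lem:Len} and Corollary \ref{Cor:Len}; your $M$ is exactly this $D$), and then count such $A$. Your local densities are correct and do rearrange into \eqref{first_asymp}, and your remark that non-squarefreeness of $\Delta(f)$ is automatic except in the borderline case $(n,m)=(2,1)$ is accurate --- in fact more careful than the paper, whose Corollary \ref{Cor:Len} guarantees a non-squarefree discriminant for $m=1$ only when $t\ge 3$ or $A$ is even. (Two harmless slips: you dropped the factor $m^{n}$ from $\Delta(f)$, and for $p \mid t-1$ one has $p \nmid M$ unconditionally, not ``$p \mid M \iff p \mid A$''; neither affects your conclusions.)

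The genuine gap is in your final step, where you assert that controlling the tail --- those $A \le X$ with $p^{2} \mid h(A)$ for some large prime $p$ --- ``requires the squarefree-sieve input of Theorem \ref{Thm:Pasten}.'' It does not, and it cannot: Theorem \ref{Thm:Pasten} counts squarefree values $F(p)$ at \emph{prime} arguments and yields only $N_F(x) \sim c_F x/\log x$, with no power-saving error term (and it is conditional on $abc$ for high-degree factors); it could never produce the $O(X^{3/4})$ in \eqref{first_asymp}. In the paper, Theorem \ref{Thm:Pasten} is used only for Theorem \ref{general_type}. The power saving here comes entirely from Theorem \ref{mainasymp}, whose proof is elementary precisely because both relevant forms are linear in $A$: for $d > X_0$ the number of $y \le X$ with $d^2 \mid \alpha\beta_0 y + \beta$ is $O(X/d^2 + 1)$, so the tail over $X_0 < d \le \sqrt{F(X)}$ is $O(X/X_0 + \sqrt{X})$, and balancing with Prachar's error via $X_0 = X^{1/4}$ gives $O(X^{3/4})$. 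Your emphasis on ``the degree-two nature of $h$'' is a red herring: what matters is that $h$ splits into two linear factors; an irreducible factor of degree $\ge 2$ is exactly what would force deep sieve input. Two further points of bookkeeping you gloss over: Theorem \ref{mainasymp} requires \emph{full} squarefreeness of $\alpha\beta_0 y + \beta$, so you must observe (as is true when $A$ is squarefree and $\kappa \mid A$) that squarefreeness of $D$ away from $t(t-1)\kappa$ is equivalent to squarefreeness of $D$; and to satisfy the hypothesis $\gcd(\alpha,\beta)=1$ in \eqref{Eq:DanRestrictions} one must substitute $A=\kappa a$ and split $\kappa$ as $\alpha_0\beta_0$ with $\beta_0 = \gcd(t,\kappa)$, $\alpha_0 = \kappa/\beta_0$, $\beta = (-1)^{t-1}t^t$, $\alpha = (t-1)^{t-1}\alpha_0$, which is how the paper actually invokes Theorem \ref{mainasymp}.
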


It is curious that, for many pairs $(n,m)$, the quantity in \eqref{first_asymp} is asymptotic to $cX$ where $c$ is very near (but never exceeds) $1/ (\kappa \zeta(2))$; see comments on \cite{BSW} above. This phenomenon occurs when $t(t-1)$ is divisible by all primes less than a ``large" magnitude and $m$ has a ``reasonable" number of factors not consisting of ``small" primes. In any case, the situation $m=1$ is particularly interesting, as the divisibility condition on $A$ becomes trivial, yielding a complete classification of monogenic polynomials of the form $x^n + Ax + A$ (with $A \ge 2$), essentially all of which have non-squarefree discriminant. 

\begin{thm}
\label{second_type}
Let $n\ge 2$ be an integer, $m\ge 1$ be a proper divisor of $n$, and $t=n/m$. Let $\kappa$ denote the squarefree kernel of $m$ and suppose that $\kappa$ is a divisor of {\rm 210}. Then there exist exactly
\begin{equation}
\label{second_asymp}
\frac{X}{\kappa^2 \zeta(2)} \prod_{p \ddiv \kappa} \left( 1 - p^{-2} \right)^{-1} \prod_{p \ndiv t(t-1)\kappa} \left( 1 - \frac{1}{p^2 - 1} \right) + O(X^{3/4})
\end{equation}
positive integers $A \le X$ such that $A \equiv -1 \pmod{\kappa^2}$ and $f(x)=x^{n}+Ax^{m}+A$ is monogenic with non-squarefree discriminant. 
The implied constant in \eqref{second_asymp} is dependent on $n$ and $m$.
\end{thm}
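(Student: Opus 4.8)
The plan is to follow the same three-stage strategy that underlies Theorem \ref{first_type}: first reduce the discriminant to a usable shape, then convert monogenicity into local conditions on $A$ via the Jakhar--Khanduja--Sangwan criterion \cite{JKS1}, and finally count the admissible $A$ using the squarefree-sieve asymptotics of Theorem \ref{mainasymp} and Theorem \ref{Thm:Pasten}. First I would record the discriminant. Since $m\ddiv n$ we have $\gcd(n,m)=m$, and the standard trinomial discriminant formula, after factoring out $A^{t-1}$ and substituting $n=tm$ and $n-m=(t-1)m$, gives
\begin{equation*}
\Delta(f)=(-1)^{n(n-1)/2}\,A^{\,n-1}\,m^{\,n}\,\widetilde{M}^{\,m},\qquad \widetilde{M}:=t^{\,t}-(-1)^{t}(t-1)^{t-1}A,
\end{equation*}
a form linear in $A$ whose leading coefficient $-(-1)^t(t-1)^{t-1}$ is a $p$-adic unit whenever $p\ndiv(t-1)$. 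Two consequences are immediate. Because $A\equiv-1\pmod{\kappa^2}$ forces $\gcd(A,m)=1$ (every $p\ddiv m$ divides $\kappa$, hence $p\ndiv A$), the factor $m^n$ survives intact, so for $m\ge2$ we obtain $p^2\ddiv\Delta(f)$ for each $p\ddiv m$ and the discriminant is automatically non-squarefree; the degenerate case $m=1$, where $\kappa=1$, coincides with Theorem \ref{first_type}. Thus the whole content of the theorem is the \emph{monogenicity} count.

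Next I would invoke the JKS criterion, which reduces $\Z[\theta]=\OK_K$ to the assertion that $p\ndiv[\OK_K:\Z[\theta]]$ for every prime $p$, and I would partition the primes into the three classes matching the Euler factors in \eqref{second_asymp}. For $p\ndiv t(t-1)\kappa$ a Dedekind/Newton-polygon computation should show that $p\ndiv$ index exactly when $p^2\ndiv A$ and $p^2\ndiv\widetilde{M}$; since $p\ndiv t$ makes $\gcd(A,\widetilde{M})$ a $p$-unit, these two forbidden residues mod $p^2$ are distinct, giving local density $1-2/p^2=(1-p^{-2})\!\left(1-\tfrac{1}{p^2-1}\right)$. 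For $p\ddiv t(t-1)$ with $p\ndiv\kappa$ the two forbidden residues collapse into one: when $p\ddiv t-1$ one checks $\widetilde{M}\equiv1\pmod p$ (so $p$ never divides $\widetilde{M}$), and when $p\ddiv t$ one checks that $p^2\ddiv\widetilde{M}\iff p^2\ddiv A$; the density is then $1-p^{-2}$, which is exactly why these primes are dropped from the second product. Assembling $\tfrac1{\kappa^2}\prod_{p\ndiv\kappa}\lambda_p$ over the two surviving classes reproduces the constant in \eqref{second_asymp}.

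The crux, and the step I expect to be the main obstacle, is the class $p\ddiv\kappa$, i.e.\ $p\ddiv m$. Here $p$ divides the degree $n$, so the ramification is wild and $p^2\ddiv\Delta(f)$, yet we must still force $p\ndiv$ index. The role of the congruence $A\equiv-1\pmod{\kappa^2}$ is precisely to pin down $A\bmod p^2$ so that the relevant JKS condition at $p$ holds independently of the remaining data, and I would verify this one prime at a time. I anticipate that fixing $A$ only modulo $p^2$ guarantees the condition solely for the small primes $p\in\{2,3,5,7\}$, which is the source of the hypothesis $\kappa\ddiv 210$; for larger primes dividing $m$ the congruence mod $\kappa^2$ would no longer pin down enough of the $p$-adic data. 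This contrasts with Theorem \ref{first_type}, where $A\equiv0\pmod\kappa$ puts one in the easier $p\ddiv\gcd(A,B)$ branch of the criterion and no such restriction is needed.

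Once this local verification is complete, the three prime classes contribute the stated Euler product with no surviving condition at $p\ddiv\kappa$ (those densities having been absorbed into the factor $1/\kappa^2$ coming from the single progression $A\equiv-1\pmod{\kappa^2}$). The counting problem then becomes: count $A\le X$ with $A\equiv-1\pmod{\kappa^2}$ for which the reducible quadratic $A\,\widetilde{M}(A)$ avoids square-divisibility at every $p\ndiv\kappa$ (two forbidden residues mod $p^2$ when $p\ndiv t(t-1)$, one otherwise). Feeding this into the squarefree-value asymptotics of Theorem \ref{mainasymp} and Theorem \ref{Thm:Pasten} should produce the main term together with the $O(X^{3/4})$ error, the implied constant depending on $n$ and $m$ through the finitely many bad primes dividing $t(t-1)\kappa$.
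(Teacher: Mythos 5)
Your overall strategy is the paper's own: reduce to the criterion that $f(x)$ is monogenic if and only if $A$ and $D=t^t+(1-t)^{t-1}A$ are both squarefree (via Theorem \ref{Thm:JKS1Main}), observe that $m^n\mid\Delta(f)$ forces a non-squarefree discriminant when $m\ge 2$, and then count $A\equiv-1\pmod{\kappa^2}$ with $A$ and $D$ squarefree using Theorem \ref{mainasymp}. Your local density bookkeeping away from $\kappa$ is correct and matches the Euler factors of \eqref{second_asymp}; one small correction is that Theorem \ref{Thm:Pasten} plays no role here (it counts prime arguments and is used only for Theorem \ref{general_type}, and it would not yield the $O(X^{3/4})$ error term).

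However, the two steps carrying the real mathematical content are deferred rather than proved, and both are genuine gaps. First, for $p\mid\kappa$ you say you ``anticipate'' that $A\equiv-1\pmod{p^2}$ forces the index condition at $p$ precisely for $p\in\{2,3,5,7\}$. This is exactly the content of Proposition \ref{Prop:Len2}, whose proof must establish statement \eqref{JKS:C4Main} of Theorem \ref{Thm:JKS1Main}: that $G_1=x^{ts}+Ax^s+A$ and $G_2=\frac{1}{p}\left(Ax^{sp^k}+A+(-Ax^s-A)^{p^k}\right)$ are coprime modulo $p$. That verification is not routine: the congruence $A\equiv-1\pmod{p^2}$ is used to reduce $\overline{G_2}$ to the binomial sum $\frac{1}{p}\sum_{j=1}^{p^k-1}\binom{p^k}{j}x^{s(p^k-j)}$, the quantities $\binom{p^k}{j}/p \bmod p$ are computed explicitly for $p\in\{2,3,5,7\}$ to factor $\overline{G_2}$ into powers of $x$, $x^s+1$ and $x^{2s}+x^s+1$, and common roots with $\overline{G_1}=x^{ts}-x^s-1$ are excluded by a case analysis using $\alpha^{3s}=1$. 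Nothing in your outline substitutes for this computation, and it is where the hypothesis $\kappa\mid 210$ actually does its work. Second, to legitimately invoke Theorem \ref{mainasymp} with $\gamma=\kappa$, $\rho=\kappa^2-1$, $\alpha=(t-1)^{t-1}$, $\beta=(-1)^{t-1}t^t$, one must check condition 5 of \eqref{Eq:DanRestrictions}, namely $(t-1)^{t-1}(\kappa^2-1)+(-1)^{t-1}t^t\not\equiv 0\pmod{p^2}$ for each $p\mid\kappa$. This cannot be waved away as ``densities absorbed into the factor $1/\kappa^2$'': if it failed at some $p$, then \emph{every} $A$ in the progression would satisfy $p^2\mid D$, the set being counted would be empty, and the main term of \eqref{second_asymp} would be false. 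The paper proves this compatibility by showing the expression is periodic in $t$ modulo $p^2$ with period at most $p^2(p-1)$ and performing a finite check for $p\in\{2,3,5,7\}$ --- a second, independent use of $\kappa\mid 210$ that your proposal never identifies.
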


Note that, for a fixed integer $n\ge 2$, the only overlap with the families of trinomials in Theorem \ref{first_type} and Theorem \ref{second_type} is when $m=1$. For the case $\kappa = 2$, Theorems \ref{first_type} and \ref{second_type} address all residue classes $A\not \equiv 1 \pmod{4}$, while the situation when $A\equiv 1 \pmod{4}$ is handled separately (see Proposition \ref{kappa2}). 
These results allow us to obtain a complete classification of monogenic polynomials of the form $x^{2^kt} + Ax^{2^k} + A$ (with $A \ge 2$, $t \ge 2$ and $k \ge 1$), all of which have non-squarefree discriminant.

The next theorem addresses, to some extent, the more general trinomials $f(x)=x^n+Ax^m+B$, where $A\ne B$.
\begin{thm}
\label{general_type}
Let $n\ge 2$ be an integer, $m\ge 1$ be a proper divisor of $n$, and $t=n/m$. Let $\kappa$ denote the squarefree kernel of $m$. Suppose $r$ is a prime such that $\kappa\not \equiv 0 \pmod{r}$, $A\equiv 0\pmod{r\kappa}$ and $\gcd(A/r\kappa,t)=1$. Then there exist infinitely many positive integers $B$ such that  $f(x)=x^n+A x^m+B$ is monogenic. This result is unconditional when $2\le t\le 4$ and conditional on the $abc$-conjecture for number fields when $t\ge 5$. For $m \ge 2$, infinitely many such $B$ exist where $\Delta(f)$ is not squarefree.
\end{thm}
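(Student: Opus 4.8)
My plan is to combine the factored discriminant with the Jakhar–Khanduja–Sangwan (JKS) criterion \cite{JKS1}, thereby reducing monogenicity to finitely many congruences on $B$ together with a squarefreeness condition on a single polynomial in $B$. With $d=\gcd(n,m)=m$ and $t=n/m$, the trinomial discriminant formula gives
\[
\Delta(f)=(-1)^{n(n-1)/2}\,B^{m-1}\,D(B)^{m},\qquad D(B):=n^{t}B^{t-1}-(-1)^{t}(n-m)^{t-1}mA^{t},
\]
where $D(B)$ is a separable polynomial in $B$ of degree $t-1$. First I would secure irreducibility: since $r\mid A$ by hypothesis, imposing $v_r(B)=1$ makes $f$ Eisenstein at $r$, hence irreducible over $\Q$. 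Note that the same choice will also settle the JKS condition at $r$.

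Next I would translate the JKS criterion into conditions on $B$ prime-by-prime, splitting the primes $p\mid\Delta(f)$ into the fixed primes $p\mid A$ and the varying primes $p\nmid A$. For the fixed primes the local condition depends only on $A,B$ modulo a bounded prime power, so each becomes a congruence on $B$. At $p=r$ (where $r\mid A$, $r\mid B$) the choice $v_r(B)=1$ already forces $p^{2}\nmid B$, which is exactly what the criterion needs. For $p\mid A$ with $p\nmid r\kappa$, the hypothesis $\gcd(A/(r\kappa),t)=1$ gives $p\nmid t$, and since $p\nmid\kappa$ means $p\nmid m$, we get $p\nmid n$; choosing $p\nmid B$ then makes $D(B)\equiv n^{t}B^{t-1}\not\equiv 0\pmod p$ and $p\nmid B^{m-1}$, so $p\nmid\Delta(f)$ altogether and $p$ drops out. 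The genuinely delicate fixed primes are those with $p\mid\kappa$ (equivalently $p\mid m$), where $p\mid A$ forces $p\mid D(B)$; here I would use $\kappa\mid A$ to reduce $f\bmod p$ to $x^{n}+B$ and select a suitable residue of $B$ modulo a bounded power of $p$ so the criterion holds. Collecting these yields a single admissible residue class for $B$ modulo a fixed modulus $Q$ built from $r^{2}$ and the primes dividing $A$.

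For the varying primes $p\nmid A$ dividing $\Delta(f)$, the JKS condition holds as soon as $v_p(B)\le 1$ and $v_p(D(B))\le 1$; that is, it suffices that $B$ and $D(B)$ both be squarefree away from the fixed finite set $S$ of primes dividing $An\,t(t-1)$. Since $D(B)\equiv -(-1)^{t}(n-m)^{t-1}mA^{t}\pmod{B}$, enlarging $S$ by the primes dividing $t(t-1)$ makes $B$ and $D(B)$ coprime off $S$, so the two squarefreeness requirements are independent there. Thus the whole problem reduces to counting $B\le X$ in the admissible class modulo $Q$ for which $B$ and $D(B)$ are simultaneously squarefree off $S$. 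After checking there is no forced square divisor of $D(B)$ on this progression—which follows from separability of $D$ and the way the congruences at $p\mid A$ were chosen—the count is $\gg X$ by Theorem \ref{mainasymp} when $\deg D=t-1\le 3$, i.e. $2\le t\le 4$, and by Theorem \ref{Thm:Pasten} (conditional on the $abc$-conjecture for number fields) when $t-1\ge 4$, i.e. $t\ge 5$. This produces infinitely many admissible $B$, each giving a monogenic $f$. Finally, for $m\ge 2$ the factor $D(B)^{m}$ has exponent $m\ge 2$, and $|D(B)|\to\infty$, so all but finitely many of these $B$ make $\Delta(f)$ non-squarefree.

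I expect the main obstacle to lie in the second and third paragraphs: extracting from \cite{JKS1} the precise local condition at the primes $p\mid\kappa$, and verifying that the hypotheses $\kappa\mid A$, $r\nmid\kappa$, and $\gcd(A/(r\kappa),t)=1$ render all the fixed-prime conditions simultaneously solvable while leaving no forced square factor of $D(B)$ on the resulting progression (so that the squarefree-value input delivers a positive density rather than an empty count). Once this congruence bookkeeping and the local-obstruction check are complete, the conditional/unconditional dichotomy is inherited immediately from the degree $t-1$ of $D$ in Theorems \ref{mainasymp} and \ref{Thm:Pasten} and requires no further argument.
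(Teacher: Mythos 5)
Your overall skeleton (reduce monogenicity via the JKS criterion to congruences on $B$ plus squarefreeness of $B$ and $D(B)$, then count) is in the spirit of the paper, but two of your steps have genuine gaps, and they are precisely the places where the paper's proof does something you haven't reproduced. First, your counting step does not go through with the tools you cite. Theorem \ref{mainasymp} treats simultaneous squarefreeness of $y$ and the \emph{linear} form $\alpha\beta_0 y+\beta$ only, so it covers just $t=2$; it says nothing about $D(B)$ of degree $2$ or $3$, i.e.\ the cases $t=3,4$ that you claim it handles. Meanwhile Theorem \ref{Thm:Pasten} counts squarefree values $F(p)$ at \emph{prime} arguments $p$, whereas your $B$ ranges over a residue class modulo $Q$ with $r\,\|\,B$ forced by your Eisenstein condition, so $B$ is composite and Pasten's theorem does not apply to it directly. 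The paper bridges exactly this mismatch by parametrizing $B=pr\kappa$ with $p>A$ prime: then $B$ is automatically squarefree, $\gcd(A,B)=r\kappa\equiv 0\pmod{\kappa}$, and
\[
D=\frac{t^tB^{t-1}+(1-t)^{t-1}A^t}{\gcd(A,B)^{t-1}}=t^tp^{t-1}+(1-t)^{t-1}a^t r\kappa=:F(p),
\]
a fixed polynomial of degree $t-1$ evaluated at the prime $p$ — precisely the setting of Corollary \ref{Cor:Squarefree}, which yields the stated unconditional ($t\le 4$) versus conditional ($t\ge 5$) dichotomy. (Your route could be repaired by reparametrizing the progression as $B=Qs+R$ and applying Corollary \ref{Cor:Squarefree} in the variable $s$, but that is not what you wrote, and it still requires the obstruction check below.)

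Second, the two items you defer — the local conditions at primes $p\mid\kappa$ and the absence of a ``forced square divisor'' of $D(B)$ — are not routine bookkeeping; they are where the theorem's hypotheses are actually consumed. The paper avoids your delicate condition-(2) analysis at $p\mid\kappa$ entirely by arranging $\kappa\mid\gcd(A,B)$, so that Lemma \ref{Lem:Len} applies: statement (4) of Theorem \ref{Thm:JKS1Main} is vacuous, statement (1) reduces to $B$ squarefree, and statement (2) never arises for such $p$. And your claim that non-obstruction ``follows from separability of $D$'' is false as stated: a separable polynomial can perfectly well have all its values on a progression divisible by $q^2$ for some small prime $q$. The paper's verification that $F$ has no local obstruction is a genuine argument — splitting on whether $q\mid t$, and using $\gcd(A/r\kappa,t)=1$, the squarefreeness of $r\kappa$ (from $r\nmid\kappa$), and a primitive-root argument modulo $q^2$ when $q\nmid t(t-1)$ — and it is exactly there that the hypotheses $A\equiv 0\pmod{r\kappa}$, $r\nmid\kappa$, and $\gcd(A/r\kappa,t)=1$ earn their keep. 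Without that check, the squarefree-values input could deliver an empty count ($c_F=0$), and your proof yields nothing.
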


Theorems \ref{first_type}, \ref{second_type}, and \ref{general_type} differ from many previous examinations of specific trinomial forms in the literature in that the discriminants of the trinomials here are not squarefree, and their Galois groups can be relatively small (see Proposition \ref{galois_prop} and the subsequent remark). Recently in \cite{JP}, families of monogenic trinomials have been examined where the discriminant is not squarefree. However, for the trinomials in \cite{JP}, the Galois group is either the symmetric group $S_n$ or the alternating group $A_n$, where $n$ is the degree of the trinomial.

\begin{prop}
\label{galois_prop}
Let $n\ge 2$ be an integer, $m\ge 1$ be a proper divisor of $n$ and $t=n/m$. If $f(x)=x^n+Ax^m+B$ is irreducible, then
\begin{equation}
\label{galois_bound}
|\Gal_{\Q}(f)| \le \varphi(m) m^t t!,
\end{equation}
where $\Gal_{\Q}\left(f\right)$ is the Galois group over $\Q$ of $f(x)$ and $\varphi$ is the totient function.
\end{prop}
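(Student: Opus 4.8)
The plan is to exploit the fact that $m$ is a proper divisor of $n$, so that $f$ is a composition of polynomials. Writing $t=n/m$ and substituting $y=x^m$, we have $f(x)=g(x^m)$, where $g(y)=y^t+Ay+B$ has degree $t$. The roots of $f$ are then the $m$th roots of the roots of $g$, and I would analyze the splitting field $L$ of $f$ over $\Q$ by filtering it through the splitting field $M$ of $g$ over $\Q$.

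First I would record that, since $f$ is irreducible of degree $n\ge 2$, its constant term $B=f(0)$ is nonzero, so every root $\beta_i$ of $g$ is nonzero. Fixing an $m$th root $\gamma_i$ of each $\beta_i$ together with a primitive $m$th root of unity $\zeta_m$, the $n=mt$ roots of $f$ are exactly $\{\zeta_m^{\,j}\gamma_i : 1\le i\le t,\ 0\le j\le m-1\}$, and these are distinct since we are in characteristic zero. In particular, $L$ contains $M=\Q(\beta_1,\dots,\beta_t)$, it contains $\zeta_m$ (which arises as a ratio of two roots of $f$), and one has $L=M(\zeta_m)(\gamma_1,\dots,\gamma_t)$.

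Next I would bound $[L:\Q]=|\Gal_{\Q}(f)|$ along the tower $\Q\subseteq M\subseteq F\subseteq L$, where $F=M(\zeta_m)$. Since $g$ has degree $t$, its Galois group $\Gal_{\Q}(g)=\Gal(M/\Q)$ acts faithfully on the at most $t$ distinct roots of $g$ and hence embeds in $S_t$, giving $[M:\Q]\le t!$. Adjoining the root of unity gives $[F:M]\le[\Q(\zeta_m):\Q]=\varphi(m)$. Finally, because $\zeta_m\in F$, each $\gamma_i$ generates a Kummer-type extension of degree dividing $m$ over the field obtained by adjoining the previous radicals $\gamma_1,\dots,\gamma_{i-1}$, so by multiplicativity of degrees $[L:F]\le m^t$. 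Multiplying the three bounds yields $[L:\Q]\le\varphi(m)\,m^t\,t!$, which is \eqref{galois_bound}.

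The one point that genuinely requires care is the ordering of the tower: the clean factor $m^t$ for the radical part depends on $\zeta_m$ already lying in the base field $F$ before the $\gamma_i$ are adjoined, so that each successive extension by a $\gamma_i$ is a radical extension of degree at most $m$ rather than something larger. Everything else is elementary degree bookkeeping, and notably no irreducibility hypothesis on $g$ is needed; only the separability of $f$ is used, and that holds automatically in characteristic zero.
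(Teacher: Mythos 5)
Your proof is correct, but it is organized differently from the paper's. You pass immediately to the resolvent $g(y)=y^t+Ay+B$ with $f(x)=g(x^m)$ and bound along the tower $\Q\subseteq M\subseteq F=M(\zeta_m)\subseteq L$, obtaining the three factors separately: $[M:\Q]\le t!$ from the embedding $\Gal(M/\Q)\hookrightarrow S_t$, $[F:M]\le\varphi(m)$, and $[L:F]\le m^t$ from the $t$ radicals. The paper never introduces $g$; it adjoins roots of $f$ itself one at a time, using the factorization $f(x)=\left(x^m-\theta^m\right)g_1(x)$ over $\Q(\theta)$: first $\theta$ (degree $tm$), then $\zeta_m$ (degree at most $\varphi(m)$), after which $x^m-\theta^m$ splits; then one root $\eta$ of the cofactor $g_1$ of degree $(t-1)m$, after which $x^m-\eta^m$ splits; and so on, so the bound appears as $tm\cdot\varphi(m)\cdot(t-1)m\cdots m=\varphi(m)\,m^t\,t!$. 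The two towers are genuinely different groupings of the same count. Yours isolates the group-theoretic content — the $t!$ is really a bound on $|\Gal_{\Q}(g)|$, so your argument in fact proves the sharper statement $|\Gal_{\Q}(f)|\le\varphi(m)\,m^t\,|\Gal_{\Q}(g)|$ — while the paper's iterative factorization is more explicit about the roots of $f$ and how each degree-$m$ block splits off. One small correction to your closing remark: the factor $m^t$ does not actually require $\zeta_m$ to lie in the base field before the $\gamma_i$ are adjoined, since each $\gamma_i$ has degree at most $m$ over any field containing $\beta_i$ simply by being a root of $x^m-\beta_i$; the presence of $\zeta_m$ is needed only if one wants each step to be Kummer (cyclic of degree dividing $m$), or, in the paper's arrangement, to know that adjoining a single root of $x^m-\eta^m$ splits that factor completely.
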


For $n$, $m$ and $t$ as appearing in Theorems \ref{first_type}, \ref{second_type} and \ref{general_type}, note that the ratio of $\varphi(m) m^t t!$ to $|S_n|$ can be made arbitrarily small by taking $n$ large and $m$ commensurate with $n$.

\section{Basic Preliminaries}
\label{prelims}

We present some known facts that are used to establish the results in this article. We first state two theorems which allow us to study various properties relating to the monogeneity of a given monic trinomial using calculations involving only its coefficients and exponents.
\begin{thm}
\label{Thm:polydisc}
{\rm \cite{Swan}}
Let $f(x)=x^n+Ax^m+B\in \Z[x]$, where $0<m<n$, and let $d=\gcd(n,m)$. Then
\[
\Delta(f)=(-1)^{n(n-1)/2}B^{m-1}\left(n^{n/d}B^{(n-m)/d}-(-1)^{n/d}(n-m)^{(n-m)/d}m^{m/d}A^{n/d}\right)^d.
\]
\end{thm}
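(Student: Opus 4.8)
The plan is to derive the formula from the standard identity relating the discriminant of a monic polynomial to the resultant of $f$ with its derivative, namely $\Delta(f)=(-1)^{n(n-1)/2}\operatorname{Res}(f,f')$, and then to exploit the special shape of $f'$. Since
\[
f'(x)=nx^{n-1}+Amx^{m-1}=x^{m-1}\bigl(nx^{n-m}+Am\bigr),
\]
the multiplicativity of the resultant in its second argument gives $\operatorname{Res}(f,f')=\operatorname{Res}\bigl(f,x^{m-1}\bigr)\cdot\operatorname{Res}\bigl(f,nx^{n-m}+Am\bigr)$, so the problem reduces to evaluating two simpler resultants and reconciling their signs.

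The first factor is immediate: writing $\alpha_1,\dots,\alpha_n$ for the roots of $f$, we have $\operatorname{Res}(f,x^{m-1})=\prod_i\alpha_i^{m-1}=\bigl((-1)^nB\bigr)^{m-1}$, since the product of the roots is $(-1)^nB$; this produces the factor $B^{m-1}$ (up to a sign to be tracked). For the second factor I would instead use the dual expression $\operatorname{Res}(f,h)=(-1)^{n(n-m)}n^{n}\prod_{h(\beta)=0}f(\beta)$, where $h(x)=nx^{n-m}+Am$. The advantage is that on each root $\beta$ of $h$ one has $\beta^{n-m}=-Am/n$, so $f(\beta)=\beta^m(\beta^{n-m}+A)+B$ collapses to the linear expression $f(\beta)=\tfrac{A(n-m)}{n}\beta^m+B$. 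The task then becomes computing $\prod_\beta\bigl(c\beta^m+B\bigr)$ with $c=A(n-m)/n$, as $\beta$ ranges over the $(n-m)$-th roots of $-Am/n$.

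The key — and most delicate — step is this last product, and it is precisely where $d=\gcd(n,m)$ enters. Fixing one root $\beta_0$ and letting $\zeta$ be a primitive $(n-m)$-th root of unity, the roots of $h$ are the $\zeta^k\beta_0$, so that $\beta^m=\zeta^{km}\beta_0^m$. As $k$ runs over a complete residue system modulo $n-m$, the quantity $\zeta^{km}$ runs over the full set of $e$-th roots of unity, each attained exactly $d$ times, where $e=(n-m)/d$ and I use $\gcd(n-m,m)=\gcd(n,m)=d$. Grouping the factors accordingly and applying the identity $\prod_{\omega^e=1}(X-\omega Y)=X^e-Y^e$ turns the product into a perfect $d$-th power,
\[
\prod_\beta\bigl(c\beta^m+B\bigr)=\Bigl(B^{e}-(-1)^{e}c^{e}\beta_0^{me}\Bigr)^{d},
\]
after which the relations $\beta_0^{me}=(\beta_0^{n-m})^{m/d}=(-Am/n)^{m/d}$ and $e=(n-m)/d$ let me expand the inner bracket. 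Collecting powers of $A$, $n$, $m$, and $n-m$, and merging the two sign contributions $(-1)^{(n-m)/d}$ and $(-1)^{m/d}$ into $(-1)^{n/d}$, yields $n^{-n}\bigl(n^{n/d}B^{(n-m)/d}-(-1)^{n/d}(n-m)^{(n-m)/d}m^{m/d}A^{n/d}\bigr)^d$; the prefactor $n^n$ from the resultant cancels the $n^{-n}$, producing exactly the bracketed quantity of the theorem.

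Finally I would assemble the pieces and verify the global sign. The two sign factors $(-1)^{n(m-1)}$ and $(-1)^{n(n-m)}$ from the two resultant computations combine to $(-1)^{n(n-1)}=1$, so the only surviving sign is the $(-1)^{n(n-1)/2}$ from the discriminant–resultant identity, giving the stated formula. I expect no single step to be conceptually deep, but the bookkeeping in the combinatorial grouping of roots and the consolidation of the many sign and exponent factors is the part most prone to error, and it is where I would spend the most care; useful sanity checks are to specialize to $m=1$, where $d=1$ and the formula reduces to the classical trinomial discriminant, and to verify small degrees directly.
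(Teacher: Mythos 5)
Your proposal is correct: the resultant decomposition $\operatorname{Res}(f,f')=\operatorname{Res}(f,x^{m-1})\operatorname{Res}(f,nx^{n-m}+Am)$, the evaluation of $f$ at the roots of the binomial factor, and the grouping of $\zeta^{km}$ into $e$-th roots of unity each hit $d$ times all check out, including the sign bookkeeping $(-1)^{n(m-1)+n(n-m)}=(-1)^{n(n-1)}=1$. The paper itself gives no proof of this statement (it is quoted from Swan's 1962 paper), and your argument is essentially the standard derivation found in that cited source, so there is nothing further to reconcile.
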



\begin{thm}{\rm \cite{JKS1}}\label{Thm:JKS1Main}
Let $n\ge 2$ be an integer.
Let $K=\Q(\theta)$ be an algebraic number field with $\theta\in \OK_K$, the ring of integers of $K$, having minimal polynomial $f(x)=x^{n}+Ax^m+B$ over $\Q$, where $m\ge 1$ is a proper divisor of $n$. Let $t=n/m$. A prime factor $p$ of $\Delta(f)$ does not divide $\left[\OK_K:\Z[\theta]\right]$ if and only if all of the following statements are true: 
\begin{enumerate}[font=\normalfont]
  \item \label{JKS:C1Main} if $p\mid A$ and $p\mid B$, then $p^2\nmid B$;
  \item \label{JKS:C2Main} if $p\mid A$ and $p\nmid B$, then
  \[\mbox{either } \quad p\mid a_2 \mbox{ and } p\nmid b_1 \quad \mbox{ or } \quad p\nmid a_2\left(a_2^tB+\left(-b_1\right)^t\right),\]
  where $a_2=A/p$ and $b_1=\frac{B+(-B)^{p^j}}{p}$ with $p^j\mid\mid tm$;
  \item \label{JKS:C3Main} if $p\nmid A$ and $p\mid B$, then
  \[\mbox{either } \quad p\mid a_1 \mbox{ and } p\nmid b_2 \quad \mbox{ or } \quad p\nmid a_1b_2^{m-1}\left(Aa_1^{t-1}+\left(-b_2\right)^{t-1}\right),\]
  where $a_1=\frac{A+(-A)^{p^\ell}}{p}$ with $p^\ell\mid\mid (t-1)m$, and $b_2=B/p$;
  \item \label{JKS:C4Main} if $p\nmid AB$ and $p\mid m$ with $n=s^{\prime}p^k$, $m=sp^k$, $p\nmid \gcd\left(s^{\prime},s\right)$, then the polynomials
   \begin{equation*}
     x^{s^{\prime}}+Ax^s+B \quad \mbox{and}\quad \dfrac{Ax^{sp^k}+B+\left(-Ax^s-B\right)^{p^k}}{p}
   \end{equation*}
   are coprime modulo $p$;
         \item \label{JKS:C5Main} if $p\nmid ABm$, then
     \[p^2\nmid \left(t^tB^{t-1}+(1-t)^{t-1}A^t\right).\]
   \end{enumerate}
\end{thm}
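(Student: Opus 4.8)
The plan is to invoke the theory of Newton polygons in the style of Ore, whose output---the \emph{theorem of the index}---computes $v_p\bigl([\OK_K:\Z[\theta]]\bigr)$ from the $\phi$-Newton polygons of $f$ as $\phi$ ranges over the monic irreducible factors of $\bar f:=f\bmod p$, and in particular shows that $p\ndiv[\OK_K:\Z[\theta]]$ if and only if $f$ is \emph{$p$-regular}, i.e.\ the residual polynomial attached to each side of each $\phi$-Newton polygon is separable. (Its degenerate first-order instance is the familiar criterion of Dedekind.) First I would observe that throughout we may assume $p\ddiv\Delta(f)$, so that Theorem~\ref{Thm:polydisc} is available with $d=\gcd(n,m)=m$ and $n/d=t$. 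The five clauses (\ref{JKS:C1Main})--(\ref{JKS:C5Main}) are then precisely the mutually exclusive and exhaustive cases determined by the divisibility of $p$ into $A$, $B$ and $m$; in each case the shape of $\bar f$, hence of the relevant Newton polygons, is completely determined, and the task reduces to showing that $p$-regularity is equivalent to the stated condition.

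I would first treat the cases with $p\ddiv AB$. If $p\ddiv A$ and $p\ddiv B$, then $\bar f=x^{n}$ and the $x$-Newton polygon is the lower hull of $(0,v_p(B))$, $(m,v_p(A))$, $(n,0)$; the hypothesis $p^{2}\ndiv B$ is equivalent to $v_p(B)=1$, forcing a single side from $(0,1)$ to $(n,0)$ (the middle point lying above it since $v_p(A)\ge 1$), whose residual polynomial has degree $\gcd(1,n)=1$ and is therefore separable---this is clause~(\ref{JKS:C1Main}). If $p\ddiv A$ and $p\ndiv B$, Theorem~\ref{Thm:polydisc} forces $p\ddiv n$; writing $n=p^{j}n_0$ with $p\ndiv n_0$ one finds $\bar f=(x^{n_0}+\bar B)^{p^{j}}$, so every irreducible factor of the separable polynomial $x^{n_0}+\bar B$ occurs with multiplicity exactly $p^{j}$, and the residual computation collapses to the Frobenius-corrected quantity $b_1=\bigl(B+(-B)^{p^{j}}\bigr)/p$ (an integer since $(-B)^{p^{j}}\equiv -B\Mod{p}$); the two alternatives of clause~(\ref{JKS:C2Main}) record, respectively, a Newton polygon with only length-one sides (automatically regular) and the nonvanishing $p\ndiv a_2\bigl(a_2^{t}B+(-b_1)^{t}\bigr)$ expressing separability of the residual polynomial. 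The case $p\ndiv A$, $p\ddiv B$ is dual: here $\bar f=x^{m}(x^{n-m}+\bar A)$, the factor $x$ of multiplicity $m$ contributes $b_2=B/p$ (the exponent $m-1$ in the clause reflecting the length-$m$ side of the $x$-Newton polygon), while the cofactor $x^{n-m}+\bar A$, inseparable exactly when $p\ddiv(n-m)$, contributes the correction $a_1=\bigl(A+(-A)^{p^{\ell}}\bigr)/p$ with $p^{\ell}\mid\mid(t-1)m$, jointly yielding clause~(\ref{JKS:C3Main}).

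The remaining cases have $p\ndiv AB$, so $x\ndiv\bar f$ and repeated factors can arise only from $\gcd(\bar f,\bar f')$ with $f'=x^{m-1}\bigl(n x^{n-m}+Am\bigr)$. If also $p\ndiv m$ (so $p\ndiv ABm$), then $p\ddiv\Delta(f)$ together with Theorem~\ref{Thm:polydisc} forces $p\ndiv t$, the repeated factors of $\bar f$ correspond to the second factor of $\Delta(f)$, and Dedekind's criterion gives $p\ndiv[\OK_K:\Z[\theta]]$ exactly when that factor is squarefree at $p$, namely $p^{2}\ndiv\bigl(t^{t}B^{t-1}+(1-t)^{t-1}A^{t}\bigr)$---clause~(\ref{JKS:C5Main}). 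If instead $p\ddiv m$, write $n=s'p^{k}$, $m=sp^{k}$ with $p\ndiv\gcd(s',s)$; since scalars in $\F_p$ satisfy $\bar A^{p^{k}}=\bar A$ one gets $\bar f=\bigl(x^{s'}+\bar A x^{s}+\bar B\bigr)^{p^{k}}$, so the radical of $\bar f$ is the reduction of $G(x)=x^{s'}+Ax^{s}+B$. Dedekind's correction term $(f-G^{p^{k}})/p$ reduces modulo $p$ to $\bigl(Ax^{sp^{k}}+B+(-Ax^{s}-B)^{p^{k}}\bigr)/p$, and regularity is exactly coprimality of this with $G$ modulo $p$---clause~(\ref{JKS:C4Main}).

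The hard part will be the inseparable reductions---clauses (\ref{JKS:C2Main}), (\ref{JKS:C3Main}) and (\ref{JKS:C4Main})---in which $p\ddiv n$ or $p\ddiv m$ makes every relevant irreducible factor of $\bar f$ repeated, so that the plain Dedekind test, while decisive, is not explicit enough and one must actually compute residual polynomials over the (possibly nontrivial) residue fields $\F_p[x]/(\bar\phi)$ and verify that the higher $\phi$-adic coefficients of $f$ collapse uniformly in $\bar\phi$ to the compact Frobenius-corrected quantities $b_1$, $a_1$ and to the coprimality statement of clause~(\ref{JKS:C4Main}). Confirming that these explicit expressions are genuinely the residual coefficients, and that no higher-order (second-order Newton polygon) phenomena intervene, is the crux; here Swan's exact discriminant in Theorem~\ref{Thm:polydisc} serves as the essential cross-check, since matching $v_p(\Delta(f))$ against the index contribution predicted by the Newton data pins down the separability conditions unambiguously and secures both directions of the equivalence.
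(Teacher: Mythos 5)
The paper gives you nothing to compare against here: Theorem \ref{Thm:JKS1Main} is quoted verbatim from \cite{JKS1} with no proof, and your Newton-polygon plan does track the method actually used in that source. The fatal problem is your opening reduction. Ore's theorem of the index does \emph{not} say that $p \ndiv \left[\OK_K:\Z[\theta]\right]$ if and only if $f$ is $p$-regular. It says that $v_p\left(\left[\OK_K:\Z[\theta]\right]\right) \ge \sum_{\phi} \operatorname{ind}_\phi(f)$, where $\operatorname{ind}_\phi(f)$ counts the lattice points with positive ordinate lying on or below the $\phi$-Newton polygon, and that \emph{equality} holds if and only if $f$ is $p$-regular. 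Hence the correct criterion for $p \ndiv \left[\OK_K:\Z[\theta]\right]$ is the vanishing of the total polygon index, not separability of the residual polynomials; when the index vanishes, regularity is automatic, because any side whose residual polynomial has degree at least $2$ carries interior lattice points of positive ordinate that are counted.

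Concretely, take $f(x) = x^2 + 3x + 18$ and $p = 3$ (so $n = 2$, $m = 1$, $t = 2$, with $p \ddiv A$, $p \ddiv B$ and $p^2 \ddiv B$, violating statement \eqref{JKS:C1Main}). The $x$-Newton polygon is the single segment through $(0,2)$, $(1,1)$, $(2,0)$ of slope $-1$, and the residual polynomial is $y^2 + y + 2 \pmod{3}$, which is separable --- indeed irreducible --- over $\F_3$. So $f$ is $3$-regular and your criterion would certify $3 \ndiv \left[\OK_K:\Z[\theta]\right]$; but $\Delta(f) = -63$, $K = \Q(\sqrt{-7})$, $\Delta(K) = -7$, so $\left[\OK_K:\Z[\theta]\right] = 3$ by \eqref{Eq:Dis-Dis}. (Ore, correctly applied, detects this through the lattice point $(1,1)$ on the polygon, giving $\operatorname{ind} = 1$, and regularity upgrades the inequality to $v_3\left(\left[\OK_K:\Z[\theta]\right]\right) = 1$.) This breaks precisely the necessity direction of statement \eqref{JKS:C1Main}, and the same confusion infects your sketches of statements \eqref{JKS:C2Main} and \eqref{JKS:C3Main}: when $p^2 \ddiv B$ the residual polynomial can perfectly well be separable, so irregularity is not the mechanism forcing $p \ddiv \left[\OK_K:\Z[\theta]\right]$ --- a positive polygon index is. A secondary gap: in statement \eqref{JKS:C4Main} you treat $\bar{G}$, with $G = x^{s'} + Ax^s + B$, as though it were the separable radical of $\bar{f}$, but $\bar{G}$ may itself have repeated factors, so identifying your correction term $(f - G^{p^k})/p$ with Dedekind's test $\gcd(\bar{T},\bar{g},\bar{h})=1$ requires an argument you assert rather than give. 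With the master criterion corrected to ``total polygon index zero,'' your case-by-case analysis is repairable and would in essence retrace \cite{JKS1}.
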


The analysis for the asymptotics in Section \ref{sec_asymp} pertaining to Theorems \ref{first_type} and \ref{second_type} depends on a corollary of the following classical result, due to Prachar, which gives an asymptotic for $S(X;r,m)$, the number of squarefree integers $\le X$ in the arithmetic progression $r \pmod{m}$.

\begin{thm}
\label{thm:Prachar}
{\rm \cite{Prachar1}}
Let $r$ and $m$ be positive integers with $\gcd(r,m)=1$. Then
\[
S(X;r,m) = \frac{X}{m \zeta(2)} \prod_{p \mid m} \left( 1 - \frac{1}{p^2} \right)^{-1} + O\left( X^{1/2} \right)
\]
where the implied constant is independent of all variables.
\end{thm}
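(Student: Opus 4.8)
The plan is to detect squarefree integers with the Möbius function and reduce the count to a sum over arithmetic progressions that the Chinese Remainder Theorem evaluates cleanly. Using the identity $\mu(n)^2 = \sum_{d^2 \mid n} \mu(d)$ for the indicator of squarefreeness, I would begin with
\[
S(X;r,m) = \sum_{\substack{n \le X \\ n \equiv r \,(\mathrm{mod}\, m)}} \sum_{d^2 \mid n} \mu(d) = \sum_{d \le \sqrt X} \mu(d)\, N_d(X),
\]
where $N_d(X)$ counts the integers $n \le X$ satisfying simultaneously $n \equiv r \pmod m$ and $n \equiv 0 \pmod{d^2}$, and the truncation $d \le \sqrt X$ is forced since $d^2 \le n \le X$.

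The next step is to evaluate $N_d(X)$, and here the hypothesis $\gcd(r,m)=1$ does the essential work. If a prime $p$ divides both $d$ and $m$, then the second congruence gives $n \equiv 0 \pmod p$ while the first gives $n \equiv r \not\equiv 0 \pmod p$, so $N_d(X)=0$. Thus only $d$ with $\gcd(d,m)=1$ contribute, and for such $d$ we have $\gcd(d^2,m)=1$, so the Chinese Remainder Theorem collapses the two congruences into a single residue class modulo $m d^2$, giving $N_d(X) = X/(m d^2) + O(1)$ with an absolute implied constant. Substituting and separating the main term from the error yields
\[
S(X;r,m) = \frac{X}{m} \sum_{\substack{d \le \sqrt X \\ \gcd(d,m)=1}} \frac{\mu(d)}{d^2} + O\!\left( X^{1/2} \right),
\]
the error arising from at most $\sqrt X$ terms, each contributing $O(1)$.

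To finish, I would complete the sum to infinity, incurring a tail $\sum_{d > \sqrt X} d^{-2} = O(X^{-1/2})$ that contributes $O(X^{1/2})$ after multiplication by $X/m \le X$. The resulting infinite sum factors as an Euler product over primes not dividing $m$:
\[
\sum_{\substack{d \ge 1 \\ \gcd(d,m)=1}} \frac{\mu(d)}{d^2} = \prod_{p \nmid m} \left( 1 - \frac{1}{p^2} \right) = \frac{1}{\zeta(2)} \prod_{p \mid m} \left( 1 - \frac{1}{p^2} \right)^{-1},
\]
where the last equality uses $\zeta(2)^{-1} = \prod_p (1 - p^{-2})$. Assembling these pieces produces exactly the claimed main term together with an $O(X^{1/2})$ error.

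The step demanding the most care is the uniformity of the implied constant, namely its independence from both $r$ and $m$. This rests on two points I would make explicit: first, the per-term $O(1)$ in $N_d(X)$ is bounded by an absolute constant, since the number of integers in an interval of length $X/(md^2)$ differs from that length by at most $1$ regardless of the modulus; and second, the factor $X/m$ multiplying the tail estimate is at most $X$, so no dependence on $m$ leaks into the error. Both the truncation error and the tail error are then $O(X^{1/2})$ with absolute constants, which is precisely the asserted uniformity.
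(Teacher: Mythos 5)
Your proof is correct, and the uniformity claims hold up: the per-$d$ error in $N_d(X)$ is at most $1$ absolutely, the truncation at $d \le \sqrt{X}$ therefore costs $O(X^{1/2})$ with an absolute constant, and the tail of the completed sum costs $(X/m)\cdot O(X^{-1/2}) = O(X^{1/2})$ since $m \ge 1$. One point of context, though: the paper offers no proof of this statement to compare against --- Theorem \ref{thm:Prachar} is quoted as a known classical result from Prachar \cite{Prachar1}. What you have written is the standard argument (essentially Prachar's own): M\"obius detection of squarefreeness via $\mu(n)^2 = \sum_{d^2 \mid n}\mu(d)$, the observation that $\gcd(d,m)>1$ forces $N_d(X)=0$ because $\gcd(r,m)=1$, reduction by the Chinese Remainder Theorem to counting a single residue class modulo $md^2$, completion of the truncated sum, and factorization into an Euler product over $p \nmid m$. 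It is also exactly the template the authors deploy themselves in proving Theorem \ref{mainasymp} --- detection, interchange of summation, an arithmetic-progression count (there supplied by Corollary \ref{Cor:Prachar}, which is a consequence of the very statement you proved), completion, and an Euler product --- so your argument is not a divergent route but a correct supply of the omitted classical ingredient on which the paper's asymptotics rest.
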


We will eventually desire an estimate for
\[
S(X;r,m,q) := \left| \{ n \le X : \mu(n) \neq 0, n \equiv r (\text{mod } m), \gcd(n,q) = 1\} \right|
\]
when $\gcd(r,m) = \gcd(q,m)=1$. As an immediate consequence of Theorem \ref{thm:Prachar}, we obtain the following.

\begin{cor}
\label{Cor:Prachar}
Let $r$, $m$, and $q$ be positive integers with $\gcd(r,m) = \gcd(q,m)=1$. Then
\[
S(X;r,m,q) = X \left( \frac{\varphi(q)}{qm \zeta(2)} \right) \prod_{p \mid qm} \left( 1 - \frac{1}{p^2} \right)^{-1} + O \left(q X^{1/2} \right)
\]
where the implied constant is independent of all variables.
\end{cor}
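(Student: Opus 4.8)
The plan is to strip off the coprimality condition $\gcd(n,q)=1$ by Möbius inversion and then apply Prachar's theorem (Theorem \ref{thm:Prachar}) on each resulting arithmetic progression; the decisive feature is that Prachar's implied constant is uniform in the modulus, so the errors accumulated over all the progressions remain under control.

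First I would use the identity $\sum_{d\mid\gcd(n,q)}\mu(d)$ to detect $\gcd(n,q)=1$ and interchange the order of summation to obtain
\[
S(X;r,m,q)=\sum_{d\mid q}\mu(d)\,N_d,\qquad N_d:=\left|\{\,n\le X:\mu(n)\ne 0,\ n\equiv r\ (\mathrm{mod}\ m),\ d\mid n\,\}\right|,
\]
where only squarefree $d$ contribute. Next I would evaluate $N_d$ for squarefree $d\mid q$. Since $\gcd(d,m)=1$, writing $n=de$ shows that $n$ is squarefree with $d\mid n$ precisely when $e$ is squarefree with $\gcd(e,d)=1$, while $n\equiv r\ (\mathrm{mod}\ m)$ becomes $e\equiv rd^{-1}\ (\mathrm{mod}\ m)$. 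By the Chinese Remainder Theorem, the joint conditions $\gcd(e,d)=1$ and $e\equiv rd^{-1}\ (\mathrm{mod}\ m)$ describe exactly $\varphi(d)$ residue classes modulo $dm$, each coprime to $dm$ (here I use $\gcd(r,m)=1$). Thus $N_d$ is a sum of $\varphi(d)$ counts of squarefree integers $\le X/d$ in a fixed progression modulo $dm$, and Prachar's theorem applies to each, giving
\[
N_d=\frac{\varphi(d)}{d}\cdot\frac{X}{dm\,\zeta(2)}\prod_{p\mid dm}\left(1-\frac{1}{p^2}\right)^{-1}+O\!\left(\varphi(d)\,(X/d)^{1/2}\right).
\]

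Substituting back and separating the main term from the error, I would factor out the $m$-part (legitimate since $\gcd(d,m)=1$ and all the factors are multiplicative in $d$) to reach a main term
\[
\frac{X}{m\,\zeta(2)}\prod_{p\mid m}\left(1-\frac{1}{p^2}\right)^{-1}\prod_{p\mid q}\left(1-\frac{\varphi(p)}{p^2}\left(1-\frac{1}{p^2}\right)^{-1}\right),
\]
after which a one-line simplification of the local factor, $1-\tfrac{p-1}{p^2-1}=\tfrac{p}{p+1}$, identifies $\prod_{p\mid q}\tfrac{p}{p+1}$ with $\tfrac{\varphi(q)}{q}\prod_{p\mid q}\left(1-p^{-2}\right)^{-1}$, yielding exactly the claimed main term $X\,\tfrac{\varphi(q)}{qm\zeta(2)}\prod_{p\mid qm}(1-p^{-2})^{-1}$. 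For the error I would bound
\[
\sum_{d\mid q}|\mu(d)|\,\varphi(d)\,(X/d)^{1/2}=X^{1/2}\prod_{p\mid q}\left(1+\frac{p-1}{\sqrt p}\right)\le X^{1/2}\prod_{p\mid q}p\le q\,X^{1/2},
\]
where the middle inequality uses the elementary estimate $\tfrac{p-1}{\sqrt p}\le p-1$.

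I expect the genuine obstacle to lie not in any deep analytic input but in the uniform bookkeeping: correctly tracking the secondary coprimality condition $\gcd(e,d)=1$ when reducing $N_d$ to single-progression counts, and confirming that the error summed over all $d\mid q$ and all $\varphi(d)$ associated residue classes stays $O(qX^{1/2})$. This last point rests entirely on the fact, emphasized in Theorem \ref{thm:Prachar}, that the implied constant there is \emph{independent of the modulus}; without that uniformity one could not collapse the many progressions into a single error estimate, and the clean dependence on $q$ would be lost.
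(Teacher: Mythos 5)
Your argument is correct: the M\"obius detection of $\gcd(n,q)=1$, the reduction of each $N_d$ (for squarefree $d\mid q$) to $\varphi(d)$ coprime progressions modulo $dm$ via $n=de$ and the Chinese Remainder Theorem, the multiplicative evaluation of the main-term sum $\prod_{p\mid q}\bigl(1-\tfrac{1}{p+1}\bigr)=\tfrac{\varphi(q)}{q}\prod_{p\mid q}\bigl(1-p^{-2}\bigr)^{-1}$, and the error bound $X^{1/2}\prod_{p\mid q}\bigl(1+\tfrac{p-1}{\sqrt{p}}\bigr)\le qX^{1/2}$ all check out, and you correctly identify that everything hinges on the uniformity of the implied constant in Theorem \ref{thm:Prachar}. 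However, the paper offers this corollary as an \emph{immediate} consequence of Prachar's theorem, and the intended derivation is more direct than yours: since $\gcd(q,m)=1$, the conditions $n\equiv r \Mod{m}$ and $\gcd(n,q)=1$ are equivalent to $n$ lying in one of exactly $\varphi(q)$ residue classes $c \Mod{qm}$ (namely those with $c\equiv r \Mod{m}$ and $\gcd(c,q)=1$, each automatically satisfying $\gcd(c,qm)=1$), so one applies Theorem \ref{thm:Prachar} once to each such class with modulus $qm$ and sums, obtaining the main term already in the stated form and an error $O\bigl(\varphi(q)X^{1/2}\bigr)\subseteq O\bigl(qX^{1/2}\bigr)$ with no inversion or recombination needed. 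What your route buys is flexibility — M\"obius detection generalizes to situations where the coprimality condition cannot be folded into a single modulus (indeed the paper's own proof of Theorem \ref{mainasymp} uses exactly this kind of sieve) — but at the cost of the main-term bookkeeping you had to do by hand; the direct CRT splitting buys brevity and makes the $\varphi(q)$ in the main term and the factor $q$ in the error visible at a glance.
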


For the proof of Theorem \ref{general_type}, we require certain asymptotic results related to the number of primes $p$ such that $F(p)$ is squarefree for a given polynomial $F(x)$.
The question of whether there exist infinitely many $n\in \Z$ such that $F(n)$ is squarefree for a given irreducible polynomial $F(x)\in \Z[x]$ has been investigated by numerous authors. Of course, certain restrictions must be imposed on $F(x)$ to avoid trivial situations. In particular, for every prime $q$, there should exist $z\in \Z$ such that $F(z)$ is not divisible by $q^2$. When $F(x)$ is linear, the fact that there exist infinitely many $n\in \Z$ such that $F(n)$ is squarefree is straightforward. Nagel \cite{Nagel} in 1922, and Estermann \cite{Estermann} in 1931, established the corresponding result for quadratics.
Erd\H{o}s \cite{Erdos53} settled the cubic case in 1953. No results are known unconditionally for the quartic case. Erd\H{o}s also asked if it is possible to find infinitely many primes $p$ such that $F(p)$ is squarefree. To answer this question, the linear case is again relatively easy, while the quadratic and cubic cases require significantly more effort \cite{Helfgott,Helfgott-cubic,Hooley,Pasten}.
The following asymptotic represents the current status of this situation.
\begin{thm}\label{Thm:Pasten}
   Let $F(x)\in \Z[x]$, and suppose that $F(x)$ factors into a product of distinct irreducibles, where the largest degree of any irreducible factor of $F(x)$ is $d$. Define 
   \[N_F\left(x\right)=\abs{\left\{p\le x : p \mbox{ is prime and } F(p) \mbox{ is squarefree}\right\}}\]
    Then, 
   \begin{equation}\label{Eq:AssPasten}
   N_F(x)\sim c_F\dfrac{x}{\log(x)},
   \end{equation}
   where
   \[c_F=\prod_{q  \mbox{ \rm {\tiny prime}}}\left(1-\dfrac{\rho_F\left(q^2\right)}{q(q-1)}\right)\]
   and $\rho_F\left(q^2\right)$ is the number of $z\in \left(\Z/q^2\Z\right)^{*}$ such that $F(z)\equiv 0 \pmod{q^2}$. The asymptotic \eqref{Eq:AssPasten} holds unconditionally if $d\le 3$, and holds conditionally on the abc-conjecture for number fields for $F(x)$ if $d\ge 4$.
 \end{thm}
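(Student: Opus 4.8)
The plan is to follow the sieve-theoretic strategy of Hooley and Helfgott \cite{Hooley,Helfgott,Helfgott-cubic}, whose conditional extension to number fields is due to Pasten \cite{Pasten}. First I would detect the squarefree condition on $F(p)$ with the M\"obius function. Since $F(p)$ is squarefree exactly when $\sum_{d^2 \mid F(p)} \mu(d) = 1$, one has
\[
N_F(x) = \sum_{p \le x} \sum_{d^2 \mid F(p)} \mu(d) = \sum_{d \ge 1} \mu(d)\, A_d(x), \qquad A_d(x) := \abs{\{p \le x : F(p) \equiv 0 \Mod{d^2}\}}.
\]
Only squarefree $d$ contribute, and for such $d$ the Chinese Remainder Theorem shows that $F(p)\equiv 0 \Mod{d^2}$ selects $\rho_F(d^2) = \prod_{q \mid d} \rho_F(q^2)$ residue classes modulo $d^2$ that are coprime to $d$ (the classes with $q \mid z$ force $p = q$ and contribute $O(1)$). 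Because the irreducible factors of $F$ are distinct, every root is simple, so $\rho_F(q^2)$ stays bounded and the associated Euler product converges absolutely.

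Next I would extract the main term from the small-modulus range. Fixing a threshold $D = D(x)$ and invoking the equidistribution of primes in arithmetic progressions --- the Bombieri--Vinogradov theorem across $d^2 \le x^{1/2-\varepsilon}$, playing the role for primes that Theorem \ref{thm:Prachar} plays for squarefree integers --- one replaces $A_d(x)$ by $\frac{\rho_F(d^2)}{\varphi(d^2)}\,\pi(x)$ with an error summable over $d \le D$. Since $\varphi(q^2) = q(q-1)$, summing over all $d$ and completing the (absolutely convergent) series gives
\[
\sum_{d \ge 1} \mu(d)\,\frac{\rho_F(d^2)}{\varphi(d^2)} = \prod_{q}\left(1 - \frac{\rho_F(q^2)}{q(q-1)}\right) = c_F,
\]
so the small-modulus range produces $c_F\,\pi(x) \sim c_F\, x/\log(x)$, which is the claimed main term.

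The main obstacle is the tail, that is, bounding $\sum_{d > D} A_d(x)$, which amounts to controlling the primes $p \le x$ for which $F(p)$ has a large square divisor $q^2$ with $q > D$. A standard upper-bound sieve (Brun or Selberg) disposes of a medium range and reduces everything to an estimate for large $q$, where $q$ may be as large as $\sqrt{\abs{F(x)}} \asymp x^{\deg F/2}$ and equidistribution is unavailable. For those $q$, the relation $q^2 \mid F(p)$ pins the pair $(p,q)$ to an associated curve, and one must count such pairs with a genuine power saving. This is precisely where the degree hypothesis enters: for $d \le 3$ the counts of Hooley and Helfgott \cite{Hooley,Helfgott,Helfgott-cubic} (via the square sieve and lattice-point estimates on the relevant curves) supply the saving unconditionally, whereas for $d \ge 4$ no such bound is known, and one instead passes to the number field generated by a root of the relevant irreducible factor of $F$ and applies the $abc$-conjecture over that field --- in the style of Granville --- to force the square-full part of $F(p)$, and hence the large-$q$ contribution, to be negligible. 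Taming this range is both the crux of the argument and the source of the conditional hypothesis.

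Finally I would assemble the pieces: the main term $c_F\,\pi(x)$ from the small-modulus range, the $o(x/\log(x))$ tail from the large-$q$ analysis, and the $O(1)$ contributions coming from $p = q$ and from primes dividing the leading coefficient or the resultants of the distinct factors of $F$. Together these yield $N_F(x) \sim c_F\, x/\log(x)$, unconditionally when $d \le 3$ and conditionally on the $abc$-conjecture for number fields when $d \ge 4$.
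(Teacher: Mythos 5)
Your outline is correct and, at bottom, it rests on the same external inputs as the paper itself; the difference is one of exposition rather than substance. The paper gives no sieve argument at all: its proof of Theorem \ref{Thm:Pasten} is purely an assembly of cited results (as relayed to the authors by Pasten in private communication) --- the single irreducible cubic is \cite[Main Theorem]{Helfgott-cubic}; products of only cubics, only quadratics, or only linear factors follow from a key estimate in Helfgott's proof together with \cite[Lemma 3.2 and Lemma 3.3]{Pasten} and the asymptotic formula for $N_F(x)$ in \cite[p.~728]{Pasten}; products of irreducibles of mixed degrees, all at most $3$, are handled in \cite[Chapter 4]{Hooley-book}; and the conditional case $d\ge 4$ is precisely \cite[Theorem 1.1]{Pasten}. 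Your sketch instead reconstructs the machinery sitting inside those references: M\"obius detection of the squarefree condition, the main term $c_F\,\pi(x)$ from equidistribution of primes in progressions, and the large-modulus tail as the crux --- unconditional for $d\le 3$ via the square sieve and curve-counting of Hooley and Helfgott, conditional on the $abc$-conjecture over number fields for $d\ge 4$ in the style of Granville and Pasten. What your route buys is a transparent account of where the degree restriction and the $abc$ hypothesis actually enter, which the paper leaves opaque; what it does not (and, written blind, could not) supply are the genuinely hard estimates for large square divisors at prime arguments, which you defer to exactly the papers the authors cite, so neither argument is self-contained. One wrinkle you pass over quickly: when $F$ has irreducible factors of several distinct degrees, combining the estimates is not formal --- this is precisely the case the paper sends to \cite[Chapter 4]{Hooley-book} --- since the large-modulus bounds must be applied factor by factor, with the primes dividing resultants of distinct factors controlled separately, a point you only gesture at in your closing paragraph.
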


 \begin{rem}
   The conditional part of Theorem \ref{Thm:Pasten} relies on the $abc$-conjecture for each number field $\Q(\alpha)$ where $\alpha$ varies over the irrational roots of $F(x)$. For a precise statement of this generalization of the classical $abc$-conjecture, see Conjecture 1.3 in \cite{Pasten}.
 \end{rem}

 Theorem \ref{Thm:Pasten}, which is well known among analytic number theorists, follows from work of Helfgott, Hooley and Pasten. To be more explicit, Hector Pasten has relayed to us (private communication) the following information. We consider first the unconditional part ($d\le 3$) of Theorem \ref{Thm:Pasten}. The case when $f(x)$ is a single irreducible cubic is settled in \cite[Main Theorem]{Helfgott-cubic}. Then one can use a key estimate from the proof of Helfgott's main theorem in \cite{Helfgott-cubic}, along with \cite[Lemma 3.2 and Lemma 3.3]{Pasten}  and an asymptotic  formula for $N_F(x)$ from \cite[p. 728]{Pasten}, to handle the cases when $f(x)$ is a product of only cubics, only quadratics or only linear polynomials. The situation when $f(x)$ is the product of irreducibles of various degrees, all smaller than 4, is addressed in \cite[Chapter 4]{Hooley-book}.
   The conditional part (when $d\ge 4$) of Theorem \ref{Thm:Pasten} follows from \cite[Theorem 1.1]{Pasten}.

The following corollary is immediate from Theorem \ref{Thm:Pasten}. 
\begin{cor}\label{Cor:Squarefree}
 Let $F(x)\in \Z[x]$, and suppose that $F(x)$ factors into a product of distinct irreducibles, where the largest degree of any irreducible factor of $F(x)$ is $d$. To avoid the situation when $c_F=0$, we suppose further that, for each prime $q$, there exists some $z\in \left(\Z/q^2\Z\right)^{*}$
  such that $F(z)\not \equiv 0 \pmod{q^2}$. If $d\le 3$, or if $d\ge 4$ and assuming the abc-conjecture for number fields for $F(x)$,
  there exist infinitely many primes $p$ such that $F(p)$ is squarefree.
\end{cor}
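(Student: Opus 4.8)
The plan is to deduce the corollary directly from the asymptotic in Theorem \ref{Thm:Pasten}. Since that theorem gives $N_F(x)\sim c_F\,x/\log(x)$, the existence of infinitely many primes $p$ with $F(p)$ squarefree follows as soon as we establish $c_F>0$: indeed, $c_F>0$ forces $N_F(x)\to\infty$. Thus the entire task reduces to verifying that the Euler product defining $c_F$ neither vanishes nor diverges to $0$.

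First I would confirm that every factor is strictly positive. For each prime $q$, the quantity $\rho_F(q^2)$ counts roots of $F$ in $\left(\Z/q^2\Z\right)^{*}$, a group of order $\varphi(q^2)=q(q-1)$. The standing non-degeneracy hypothesis supplies at least one unit $z$ with $F(z)\not\equiv 0\pmod{q^2}$, whence $\rho_F(q^2)\le q(q-1)-1$ and
\[
0<\frac{1}{q(q-1)}\le 1-\frac{\rho_F(q^2)}{q(q-1)}\le 1.
\]
Thus each factor lies in $(0,1]$, and in particular none vanishes.

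Next I would show the product converges to a positive limit, which for factors in $(0,1]$ amounts to showing that $\sum_{q}\rho_F(q^2)/(q(q-1))$ converges, the sum ranging over primes. The key estimate is a uniform bound $\rho_F(q^2)\le \deg F$ valid for all but finitely many $q$. To obtain it, note that because $F$ is a product of distinct irreducibles it is separable, so its discriminant is nonzero; for every prime $q$ dividing neither the leading coefficient of $F$ nor this discriminant, $F\bmod q$ has only simple roots, and Hensel's lemma lifts each such root to a unique root modulo $q^2$. Consequently the number of roots of $F$ modulo $q^2$ --- and a fortiori $\rho_F(q^2)$ --- is at most the number of roots of $F\bmod q$, which is at most $\deg F$. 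Writing $D=\deg F$, we then have $\rho_F(q^2)/(q(q-1))\le D/(q(q-1))$ for all large $q$, and since $\sum_{q}1/(q(q-1))$ converges (being dominated by the telescoping series $\sum_{n\ge 2}1/(n(n-1))=1$), the series converges. Hence $c_F$ is a convergent product of positive factors with a convergent defect series, so $c_F>0$, completing the argument.

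The only delicate point is the uniform control of $\rho_F(q^2)$ for large $q$; everything else is bookkeeping with the hypotheses. I expect this Hensel-lifting step to be the main obstacle, though it is entirely standard once one observes that separability of $F$ guarantees good reduction away from a finite set of primes. The dichotomy $d\le 3$ (unconditional) versus $d\ge 4$ (conditional on the $abc$-conjecture for number fields) enters only through the corresponding clause of Theorem \ref{Thm:Pasten}; it plays no role in the positivity analysis of $c_F$.
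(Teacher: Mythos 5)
Your proposal is correct and follows essentially the same route as the paper, which simply notes that the corollary is immediate from Theorem \ref{Thm:Pasten}: the non-degeneracy hypothesis forces every factor of the Euler product for $c_F$ to be positive, so $c_F>0$ and the asymptotic $N_F(x)\sim c_F x/\log(x)$ yields infinitely many primes $p$ with $F(p)$ squarefree. Your additional verification that the product converges to a positive limit (via the Hensel-lifting bound $\rho_F(q^2)\le \deg F$ for all large $q$) is a standard detail the paper leaves implicit, and it is carried out correctly.
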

\begin{defn}\label{Def:Obstruction}
 If for some prime $q$, there does not exist $z\in \left(\Z/q^2\Z\right)^{*}$
  such that $F(z)\not \equiv 0 \pmod{q^2}$, we say that $F(x)$ has a \emph{local obstruction at $q$}.
\end{defn}

\section{More Preliminaries: New Results}

In this section, we develop some new machinery required for the proofs of Theorems \ref{first_type}, \ref{second_type} and \ref{general_type}. These results also include a new asymptotic used in the proofs of Theorems \ref{first_type} and \ref{second_type}.

\subsection{Identifying families of monogenic trinomials}
\label{sec_identify}

The following lemma will be used in the proofs of Theorems \ref{first_type} and \ref{general_type}. The general strategy of the lemma is to manufacture trinomials $f(x)=x^n+Ax^m+B$ for which statement \eqref{JKS:C4Main} in Theorem \ref{Thm:JKS1Main} is vacuously satisfied. This approach considerably reduces the work necessary to verify the statements in Theorem \ref{Thm:JKS1Main}.

\begin{lem}
\label{Lem:Len}
Let $n\ge 2$ be an integer, with $m\ge 1$ a proper divisor of $n$. Let $t=n/m$ and let $\kappa$ denote the squarefree kernel of $m$. Let $A$ and $B$ be positive integers with $\gcd(A,B)>1$, and define
\begin{equation}
\label{Eq:D}
D:=\dfrac{t^tB^{t-1}+(1-t)^{t-1}A^t}{\gcd(A,B)^{t-1}}.
\end{equation}
If $B$ and $D$ are squarefree, and $\gcd(A,B)\equiv 0\pmod{\kappa}$, then $f(x)=x^{n}+A x^{m}+B$ is monogenic. Moreover, $\Delta(f)$ is not squarefree if $m \ge 2$.
\end{lem}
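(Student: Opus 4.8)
The plan is to prove irreducibility first, then to compute $\Delta(f)$ explicitly from Theorem \ref{Thm:polydisc}, and finally to apply the Jakhar--Khanduja--Sangwan criterion (Theorem \ref{Thm:JKS1Main}) one prime at a time in order to show that $\left[\OK_K:\Z[\theta]\right]=1$. For irreducibility, I would pick any prime $p\mid \gcd(A,B)$; then $p\mid A$ and $p\mid B$, and since $B$ is squarefree we have $p^2\nmid B$, so $f(x)=x^n+Ax^m+B$ is Eisenstein at $p$ and hence irreducible over $\Q$. In particular $\Delta(f)\ne 0$, which is needed to read off valuations below.

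Since $m$ is a proper divisor of $n$, we have $\gcd(n,m)=m$ and $t=n/m\ge 2$. Substituting $d=m$ into Theorem \ref{Thm:polydisc}, writing $n=tm$ and $n-m=(t-1)m$, and using $-(-1)^t(t-1)^{t-1}=(1-t)^{t-1}$ to reconcile signs, the inner factor becomes $m^t\big(t^tB^{t-1}+(1-t)^{t-1}A^t\big)$. Hence, with $g:=\gcd(A,B)$ and $D$ as in \eqref{Eq:D},
\[
\Delta(f)=\pm\,B^{m-1}\,m^{n}\,g^{(t-1)m}\,D^{m}.
\]
This already settles the ``moreover'' assertion: if $m\ge 2$ then $\kappa>1$, so some prime $p$ divides $m$; then $p^{n}\mid \Delta(f)$ with $n\ge 2$, whence $p^2\mid\Delta(f)$ and $\Delta(f)$ is not squarefree.

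For monogenicity I would check, for every prime $p\mid\Delta(f)$, the applicable one of conditions \eqref{JKS:C1Main}--\eqref{JKS:C5Main}. The easy cases are three. If $p\mid A$ and $p\mid B$, then $B$ squarefree gives $p^2\nmid B$, so \eqref{JKS:C1Main} holds. If $p\nmid ABm$, then $p\nmid g$, so $\ord_p\big(t^tB^{t-1}+(1-t)^{t-1}A^t\big)=\ord_p(D)\le 1$ because $D$ is squarefree, and \eqref{JKS:C5Main} holds. The hypothesis of \eqref{JKS:C4Main} (namely $p\nmid AB$ and $p\mid m$) is never met, since $\kappa\mid g$ forces every prime dividing $m$ to divide both $A$ and $B$; this engineered vacuity of \eqref{JKS:C4Main} is the point flagged before the lemma.

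The crux is the two mixed cases, where $p$ divides exactly one of $A$ and $B$, and here the squarefreeness of $D$ is exactly what is needed. Suppose $p\mid A$ and $p\nmid B$. Then $p\nmid g$; reducing the numerator of $D$ modulo $p$ (where the $A^t$ term vanishes) shows that $p\mid D$ would force $p\mid t$, after which both summands $t^tB^{t-1}$ and $(1-t)^{t-1}A^t$ have $\ord_p\ge t\ge 2$, giving $\ord_p(D)\ge 2$, a contradiction; thus $p\nmid D$, and since also $p\nmid m$ (else $p\mid\kappa\mid B$), no such $p$ divides $\Delta(f)$ at all. Suppose instead $p\nmid A$ and $p\mid B$, so $p\nmid g$ and $p\mid\mid B$. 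A parallel reduction shows $p\mid D$ would force $t\equiv 1\pmod p$ and then both summands have $\ord_p\ge t-1\ge 2$ (the value $t=2$ being excluded since it would require $p\mid t-1=1$), again contradicting $D$ squarefree; hence $p\nmid D$ and $p\nmid t-1$. Combined with $p\nmid m$, this gives $p^\ell\mid\mid(t-1)m$ with $\ell=0$, so that $a_1=\big(A+(-A)^{p^0}\big)/p=0$; therefore $p\mid a_1$ while $p\nmid b_2=B/p$, and the first alternative of \eqref{JKS:C3Main} is satisfied. Having verified Theorem \ref{Thm:JKS1Main} for every prime dividing $\Delta(f)$, we conclude $\left[\OK_K:\Z[\theta]\right]=1$, so $f$ is monogenic. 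I expect the mixed cases, and in particular recognizing that $a_1$ vanishes once $p\nmid(t-1)m$, to be the main obstacle.
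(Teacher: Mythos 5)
Your proposal is correct and follows essentially the same route as the paper's proof: Eisenstein irreducibility at a prime of $\gcd(A,B)$, Swan's formula to factor $\Delta(f)$ as $\pm B^{m-1}m^{tm}\left(\gcd(A,B)^{t-1}D\right)^m$, vacuity of statement \eqref{JKS:C4Main} from $\kappa\mid\gcd(A,B)$, and the same squarefreeness-of-$D$ contradictions in the two mixed cases (including the key observation that $\ell=0$ forces $a_1=0$ in statement \eqref{JKS:C3Main}). The only difference is cosmetic: you spell out the ``moreover'' claim about $\Delta(f)$ being non-squarefree for $m\ge 2$, which the paper leaves implicit in the discriminant formula.
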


\begin{proof}
Since $\gcd(A,B)>1$ and $B$ is squarefree, it follows that $f(x)$ is Eisenstein with respect to any prime divisor of $\gcd(A,B)$. Hence, $f(x)$ is irreducible. Since
\[
-(-1)^t(t-1)^{t-1}=(1-t)^{t-1} \quad \mbox{for any value of } t\ge 2,
\]
we have, from Theorem \ref{Thm:polydisc}, that
\[\Delta\left(f\right)=(-1)^{tm(tm-1)/2}m^{tm}B^{m-1}\left(\gcd(A,B)^{t-1}D\right)^m.
\]
We now verify that all the statements in Theorem \ref{Thm:JKS1Main} are true, which will establish that $f(x)$ is monogenic. Let $p$ be a prime divisor of $\Delta(f)$. Since $B$ is squarefree, we see easily that statement \eqref{JKS:C1Main} is true.

Next, suppose that $p\mid A$ and $p\nmid B$. Then $p\nmid \gcd(A,B)$, so that $p\nmid m$ since $\kappa\mid \gcd(A,B)$. Consequently, since $p\mid \Delta(f)$ and $p\mid A$, we deduce that $p\mid t$. Hence, since $t\ge 2$ and $p\nmid \gcd(A,B)$, it follows that $D \equiv 0 \pmod{p^2}$, contradicting the fact that $D$ is squarefree. We conclude that statement \eqref{JKS:C2Main} is vacuously satisfied.

Assume now that $p\nmid A$ and $p\mid B$. Therefore, as before, $p\nmid \gcd(A,B)$ and so $p\nmid m$. If $t=2$, then clearly $p\nmid (t-1)$. If $t>2$ and $p\mid (t-1)$, then $D\equiv 0 \pmod{p^2}$, which contradicts the fact that $D$ is squarefree. Thus, $\ell=0$ so that $a_1=0$, and therefore, $p\mid a_1$. Since $B$ is squarefree, $p\nmid b_2$, which implies that statement \eqref{JKS:C3Main} is true.

Statement \eqref{JKS:C4Main} is vacuously satisfied since there are no primes $p$ such that $p\nmid AB$ and $p\mid m$.

To check that statement \eqref{JKS:C5Main} is true, suppose that $p\nmid ABm$. Then, since $p\mid \Delta(f)$, we have that $p\mid \gcd(A,B)^{t-1}D$. Thus, if $\gcd(A,B)^{t-1}D\equiv 0 \pmod{p^2}$, then  $D \equiv 0 \pmod{p^2}$, since $p\nmid \gcd(A,B)$, once again contradicting the fact that $D$ is squarefree. Therefore, we conclude that $f(x)$ is monogenic by Theorem \ref{Thm:JKS1Main}.
\end{proof}

In Lemma \ref{Lem:Len}, each of the three hypotheses, $B$ is squarefree, $D$ is squarefree and $\gcd(A,B)\equiv 0 \pmod{\kappa}$, is necessary when $m\ge 2$, and the removal of any one of them results in numerous irreducible trinomials that are not monogenic. For purposes of illustration, we provide in Table \ref{Table:1} various possibilities of $f(x)$ and $K=\Q(\theta)$, where $f(\theta)=0$, when $n=4$ and $m=2$. We let $M$ denote monogenic and $NM$ denote non-monogenic. 

\renewcommand{\arraystretch}{1.25}
\begin{table}[h]
\begin{center}
\begin{tabular}{cccccc}
Hypothesis Removed & Example & $f(x)$ & $K$ & $\Delta(f)$ & $\Delta(K)$\\
\hline
$B$ is squarefree &  $x^4+2x^2+4$ & $NM$ & $NM$ & $2^{10}\cdot 3^2$ & $2^6\cdot 3^2$\\
$D$ is squarefree &  $x^4+2x^2+10$ & $NM$ & $M$ & $2^9\cdot 3^4\cdot5$ & $2^9\cdot 5$\\
$\gcd(A,B)\equiv 0 \pmod{\kappa}$ & $x^4+5x^2+5$& $NM$ & $NM$ & $2^4\cdot5^3$ & $5^3$\\
$\gcd(A,B)\equiv 0 \pmod{\kappa}$ & $x^4+7x^2+7$& $M$ & $M$ & $2^4\cdot 3^2\cdot 7^3$ & $2^4\cdot 3^2\cdot 7^3$\\
\end{tabular}
\end{center}
\caption{Candidates of Non-monogenic irreducible quartic trinomials}
\label{Table:1}
\end{table}

\vspace{-.18in}
\begin{rem}\label{Rem:Ref} 
   The discriminants $\Delta(f)$ of trinomials $f(x)$ in Table \ref{Table:1} are obtained by norms $\NN_k\left(\NN_{K/k}(f^{\prime}(\theta)\right)$, where $f^{\prime}(x)$ is the derivative of $f(x)$, and $k$ is the quadratic subfield of $K$. The first field discriminant $\Delta(K)$ and an integral basis $\O_K=\left\{1,\omega,\sqrt{\mu},\eta\right\}$, where
  \[\mu=-1+\sqrt{-3},\quad \omega=\frac{1+\sqrt{-3}}{2}, \quad \mbox{and} \quad\eta=\frac{(3-\sqrt{-3})\sqrt{\mu}}{2},\]
  are obtained by \cite[Theorem 1 and $C_4$ in Table $C^{\prime}$]{HSW}. The other three field discriminants are confirmed by \cite{HSW}. The monogeneity of the second quartic field is shown by the choice $\xi=i+\theta i$, where $i=\sqrt{-1}$ and $g(\xi)=0$, with $g(x)=(x^2-2)^2+(2x-3)^2$. The non-monogeneity of the first quartic field is proved by the evaluation modulo $3^3$ of the norm
  \[\NN_k\left(\NN_{K/k}\left((\xi-\xi^{\sigma})(\xi-\xi^{\sigma^3}\right)\right)\]
  for any monogenic candidate element $\xi=t\omega+u\sqrt{\mu}+v\eta$, under the condition
  \[\NN_k\left(\NN_{K/k}(\xi-\xi^{\sigma^2})\right)=2^6.\]
  Here, $\sigma$ denotes an embedding of $K$ into the algebraic closure of $K$ such that
  \[\theta^{\sigma}=-\sqrt{-1-\sqrt{-3}},\] and $\prod_{i=1}^3(\xi-\xi^{\sigma^i})$ is the different of $\xi\in K$. The non-monogeneity of the third quartic field is obtained by the modulo 2 evaluation of the representation matrix of any monogenic candidate $\xi=ti+u\phi_1+v\phi_2$, where the set $\{1,i,\phi_1,\phi_2\}$, with
  \[\phi_1=\frac{\theta^2-\theta-1}{2} \mbox{ and } \phi_2=\frac{\theta^3-1}{2},\] is an integral basis for $\O_K$ by \cite{HSW}. Namely, $[\O_K:\Z[\xi]]\equiv 0 \pmod{2}$.
\end{rem}

The following corollary identifies families of monogenic trinomials of the form $x^n +A x^m + A$ and follows from Lemma \ref{Lem:Len} via the special case $A = B \ge 2$.
\begin{cor}
\label{Cor:Len}
Let $n\ge 2$ be an integer, $m\ge 1$ be a proper divisor of $n$, and $\kappa$ denote the squarefree kernel of $m$.
Suppose that $f(x)=x^{n}+Ax^m+A$, where $A\ge 2$ is an integer such that $A \equiv 0 \pmod{\kappa}$. Let $t=n/m$ and let $D$ be as defined in \eqref{Eq:D} with $B=A$. Then $f(x)$ is monogenic if and only if $A$ and $D$  are squarefree. Moreover, $\Delta(f)$ is not squarefree if either $t\ge 2$ for $m\ge 2$, $t \ge 3$ for $m=1$, or $A$ is even.
\end{cor}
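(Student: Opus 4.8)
The statement comprises two claims: the equivalence ``$f$ monogenic $\iff$ $A$ and $D$ squarefree'' and the ``moreover'' assertion about non-squarefreeness of $\Delta(f)$. The plan is to make the discriminant completely explicit first, since a single formula drives both halves. Specializing Theorem~\ref{Thm:polydisc} to $B=A$ and $d=\gcd(n,m)=m$, and using the identity $-(-1)^t(t-1)^{t-1}=(1-t)^{t-1}$ already invoked in Lemma~\ref{Lem:Len}, I would rewrite the inner factor as $m^tA^{t-1}D$, where $D=t^t+(1-t)^{t-1}A$ is the $B=A$ instance of \eqref{Eq:D}. Collecting exponents (using $(m-1)+(t-1)m=n-1$ and $tm=n$) then yields
\[
\Delta(f)=(-1)^{n(n-1)/2}\,m^{n}A^{n-1}D^{m}.
\]

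For the equivalence, the ``if'' direction is immediate: since $A\ge 2$ gives $\gcd(A,A)=A>1$ with $A\equiv 0\pmod{\kappa}$, squarefreeness of $A$ (hence of $B=A$) and of $D$ lets Lemma~\ref{Lem:Len} apply verbatim. For ``only if'' I would argue contrapositively through Theorem~\ref{Thm:JKS1Main}, assuming $f$ monogenic (hence irreducible with $\OK_K=\Z[\theta]$). If $p^2\mid A$, then $p\mid A=B$, $p^2\mid B$, and $p\mid A^{n-1}\mid\Delta(f)$, so condition \eqref{JKS:C1Main} fails and $p\mid[\OK_K:\Z[\theta]]$, forcing $A$ squarefree. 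Next, supposing $q^2\mid D$ (so $q\mid\Delta(f)$), I would split on $q\mid A$. If $q\mid A$, then $D\equiv t^t\pmod q$ gives $q\mid t$, whence $v_q(t^t)\ge t\ge 2$ but $v_q\big((1-t)^{t-1}A\big)=1$ (as $q\nmid 1-t$ and $v_q(A)=1$), so $v_q(D)=1$, contradicting $q^2\mid D$; hence $q\nmid A$. Since $\kappa\mid A$, this forces $q\nmid\kappa$ and therefore $q\nmid m$, placing $q$ under condition \eqref{JKS:C5Main}. But $t^tB^{t-1}+(1-t)^{t-1}A^t=A^{t-1}D$ is divisible by $q^2$ (as $q\nmid A$), so \eqref{JKS:C5Main} fails, a contradiction; thus $D$ is squarefree.

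Finally, the ``moreover'' I would read off the displayed formula via $v_p(\Delta(f))=n\,v_p(m)+(n-1)v_p(A)+m\,v_p(D)$. (Note that $m$ being a proper divisor forces $t\ge 2$ always, so the hypothesis ``$t\ge 2$ for $m\ge 2$'' is simply $m\ge 2$.) When $m\ge 2$, any prime $p\mid m$ gives $v_p(\Delta(f))\ge n\ge 2$; when $m=1$ and $t\ge 3$, any prime $p\mid A$ (which exists since $A\ge 2$) gives $v_p(\Delta(f))\ge n-1=t-1\ge 2$; and when $A$ is even, the only case not already covered is $m=1,\ t=2$ (so $n=2$), where $D=4-A$ is even alongside $A$, giving $v_2(\Delta(f))\ge 2$. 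In every case $\Delta(f)$ is non-squarefree.

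The main obstacle is the ``only if'' direction, specifically the case $q\mid A$ in the proof that $D$ is squarefree: there no single Jakhar--Khanduja--Sangwan condition applies, so instead of invoking Theorem~\ref{Thm:JKS1Main} one must extract an outright arithmetic contradiction from a careful $q$-adic valuation count, crucially using both $t\ge 2$ and the already-established squarefreeness of $A$. The subsequent reduction of the case $q\nmid A$ to condition \eqref{JKS:C5Main} rests on the clean but essential observation that $\kappa\mid A$ makes every prime dividing $D$ but not $A$ automatically coprime to $m$.
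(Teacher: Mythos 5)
Your proof is correct and takes essentially the same approach as the paper's: Lemma~\ref{Lem:Len} gives sufficiency, and necessity is extracted from statements \eqref{JKS:C1Main} and \eqref{JKS:C5Main} of Theorem~\ref{Thm:JKS1Main} together with the same $q\mid t$ divisibility contradiction (phrased by you via $q$-adic valuations) in the case $q\mid A$, $q^2\mid D$. If anything, you are more explicit than the paper, which leaves implicit both the observation that $q\nmid A$ forces $q\nmid m$ (needed to place $q$ under statement \eqref{JKS:C5Main}, since $\kappa\mid A$) and the verification of the ``moreover'' clause from the formula $\Delta(f)=(-1)^{n(n-1)/2}m^{n}A^{n-1}D^{m}$.
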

\begin{proof}
Note that here we have $D=t^t+(1-t)^{t-1}A$. If $A$ and $D$ are squarefree, then $f(x)$ is monogenic by Lemma \ref{Lem:Len}. Assume then that $f(x)$ is monogenic. By Theorem \ref{Thm:polydisc},
\[
\Delta(f)=(-1)^{tm(tm-1)/2}m^{tm}A^{tm-1}D^m.
\]
Since $f(x)$ is irreducible, all of the statements in Theorem \ref{Thm:JKS1Main} must be true. Let $p$ be a prime such that $p\mid A$. Then $p\mid \Delta(f)$ and we deduce from statement \eqref{JKS:C1Main} that $A$ is squarefree.

Now suppose that $p$ is a prime such that $p\mid D$. Then $p\mid \Delta(f)$. If $p\nmid A$, then $p^2\nmid D$ by statement \eqref{JKS:C5Main}. So suppose that $p\mid A$. Then $p\mid t$ and $p^2\mid t^t$ since $t\ge 2$. If $p^2\mid D$, then $p^2\mid (1-t)^{t-1}A$. But $p\nmid (1-t)$ since $p\mid t$. Thus, $p^2\mid A$, which contradicts the fact that $A$ is squarefree. Hence, $D$ is squarefree, which completes the proof.
\end{proof}

Corollary \ref{Cor:Len} is of interest, in part, because it shows that Lemma \ref{Lem:Len} captures all of the monogenic trinomials of the form $f(x)=x^n+Ax^m+A$, where $A\ge 2$ is an integer such that $A\equiv 0 \pmod{\kappa}$, and $m\ge 1$ is a proper divisor of $n\ge 2$.
\begin{rem}
  Corollary \ref{Cor:Len} is also true for $f(x)=x^n-Ax^m-A$,  where $A\ge 2$ is an integer such that $A \equiv 0 \pmod{\kappa}$.
\end{rem}

In the next proposition, which will be used to establish Theorem \ref{second_type}, we do not attempt to manufacture trinomials such that statement \eqref{JKS:C4Main} in Theorem \ref{Thm:JKS1Main} is vacuously satisfied, as was done in Lemma \ref{Lem:Len}. The price we pay is that we need to restrict the prime divisors of $m$ so that statement \eqref{JKS:C4Main} in Theorem \ref{Thm:JKS1Main} can be verified in a reasonable manner. Here, we restrict the prime divisors of $m$ to the set $\{2,3,5,7\}$ to make these computations tractable.

\begin{prop}
\label{Prop:Len2}
Let $n\ge 2$ be an integer, $m\ge 1$ be a proper divisor of $n$, and $\kappa$ denote the squarefree kernel of $m$. Suppose that $f(x)=x^{n}+Ax^m+A$, where $A\ge 2$ is an integer such that $A \equiv -1 \pmod{\kappa^2}$ where $\kappa \mid 210$. Let $t=n/m$ and $D$ be as defined in \eqref{Eq:D} with $B=A$. Then $f(x)$ is monogenic if and only if $A$ and $D$ are squarefree. Moreover, $\Delta(f)$ is not squarefree if either $t\ge 2$ for $m\ge 2$, $t \ge 3$ for $m=1$, or $A$ is even.
\end{prop}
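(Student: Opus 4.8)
The plan is to follow the prime-by-prime strategy of Corollary~\ref{Cor:Len} via Theorem~\ref{Thm:JKS1Main}, but with one essential new feature: because $A\equiv -1\pmod{\kappa^2}$, every prime $p\mid m$ now satisfies $p\nmid A=B$, so statement~\eqref{JKS:C4Main} is no longer vacuous and becomes the crux of the argument. This is exactly where the hypothesis $\kappa\mid 210$ (i.e.\ $p\in\{2,3,5,7\}$) is spent. As in Corollary~\ref{Cor:Len}, Theorem~\ref{Thm:polydisc} gives
\[
\Delta(f)=(-1)^{tm(tm-1)/2}m^{tm}A^{tm-1}D^m,\qquad D=t^t+(1-t)^{t-1}A,
\]
and, when $A$ is squarefree, $f$ is Eisenstein at each prime dividing $A$ and hence irreducible. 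For the direction ``$A,D$ squarefree $\Rightarrow$ monogenic'', statements \eqref{JKS:C1Main}, \eqref{JKS:C2Main}, \eqref{JKS:C3Main}, \eqref{JKS:C5Main} are verified verbatim as in Lemma~\ref{Lem:Len}: with $B=A$, statements \eqref{JKS:C2Main} and \eqref{JKS:C3Main} are vacuous, statement \eqref{JKS:C1Main} holds because $A$ is squarefree, and statement \eqref{JKS:C5Main} holds because $p\nmid A$ forces $p^2\nmid A^{t-1}D=t^tB^{t-1}+(1-t)^{t-1}A^t$ when $D$ is squarefree.

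The new work is statement~\eqref{JKS:C4Main}, and this is the main obstacle. Fix a prime $p\mid m$, let $p^k$ exactly divide $m$, write $m=sp^k$ and $s'=ts$, so that $p\nmid s$. Using $A\equiv -1\pmod{p^2}$ (which is where the modulus $\kappa^2$, rather than $\kappa$, is needed) together with $(-A)^{p^k}\equiv 1\pmod{p^2}$, a short expansion collapses the numerator of the second polynomial in \eqref{JKS:C4Main} modulo $p^2$ to $\sum_{j=1}^{p^k-1}\binom{p^k}{j}x^{sj}$, so that this polynomial reduces modulo $p$ to $H_1(x^s)^{p^{k-1}}$, where
\[
H_1(y)=\frac{(y+1)^p-y^p-1}{p}\in\F_p[y],
\]
the passage from exponent $p^k$ to exponent $p$ being a Frobenius induction on $k$. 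Meanwhile the first polynomial in \eqref{JKS:C4Main} reduces to $G_1(x^s)$ with $G_1(y)=y^t-y-1$. Since every element of $\overline{\F}_p$ is an $s$-th power and $G_1(0)\neq 0$, these two polynomials in $x^s$ are coprime modulo $p$ if and only if $\gcd(G_1,H_1)=1$ in $\F_p[y]$. One then computes $H_1=y,\ y(y+1),\ y(y+1)(y^2+y+1),\ y(y+1)(y^2+y+1)^2$ for $p=2,3,5,7$ respectively, so that every root of $H_1$ lies in $\{0,-1\}\cup\{\text{primitive cube roots of unity}\}$; and $G_1(0)=-1\neq0$, $G_1(-1)=(-1)^t\neq0$, while for a primitive cube root of unity $\omega$ one has $G_1(\omega)=\omega^t+\omega^2$, which vanishes only if $\omega^{t-2}=-1$ — impossible since $\langle\omega\rangle$ has odd order $3$ and $-1$ has order $2$. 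Hence $\gcd(G_1,H_1)=1$ for every $t$, establishing \eqref{JKS:C4Main}; the explicit computation of $H_1$ over these four primes is precisely what forces $\kappa\mid 210$.

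Conversely, if $f$ is monogenic then it is irreducible and all statements of Theorem~\ref{Thm:JKS1Main} hold. Statement~\eqref{JKS:C1Main} at the primes dividing $A$ gives that $A$ is squarefree. For $D$: if $p\mid D$ with $p\nmid m$, the argument of Corollary~\ref{Cor:Len} (splitting on $p\mid A$ versus $p\nmid A$, the latter using \eqref{JKS:C5Main}) gives $p^2\nmid D$. The genuinely new case is $p\mid D$ with $p\mid m$, so $p\in\{2,3,5,7\}$ and $A\equiv -1\pmod{p^2}$; then $D\equiv t^t-(1-t)^{t-1}\pmod{p^2}$, and I would confirm by a finite, unconditional computation (the right-hand side being periodic in $t$) that $t^t\not\equiv(1-t)^{t-1}\pmod{p^2}$ for every $t\ge 2$, whence $p^2\nmid D$. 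Thus $D$ is squarefree.

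Finally, the ``Moreover'' assertion is read off from $\Delta(f)=\pm\, m^{tm}A^{tm-1}D^m$. If $m\ge 2$, any prime dividing $m$ occurs to the exponent $tm\ge 2$; if $m=1$ and $t\ge3$, any prime dividing $A\ge2$ occurs in $A^{t-1}$ to an exponent $\ge2$; and if $A$ is even, then either $tm\ge3$ (so $4\mid A^{tm-1}$) or $n=tm=2$, forcing $m=1$, $t=2$ and $D=4-A$ even, so that $4\mid\Delta(f)$ in every case.
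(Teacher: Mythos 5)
Your proof is correct, and at the strategic level it coincides with the paper's: both verify the five statements of Theorem \ref{Thm:JKS1Main}, handling \eqref{JKS:C1Main}--\eqref{JKS:C3Main} and \eqref{JKS:C5Main} exactly as in Lemma \ref{Lem:Len}, and both spend the hypothesis $\kappa \mid 210$ on statement \eqref{JKS:C4Main} after reducing modulo $p$ via $A\equiv -1 \pmod{p^2}$. Your execution of that key step is cleaner, though: the paper computes $\binom{p^k}{j}/p \bmod p$ directly for each $p\in\{2,3,5,7\}$ and general $k$, and reads off the factorization \eqref{Eq:G2} of $\overline{G_2}$, whereas you reduce to $k=1$ once and for all via $(y+1)^{p^k}\equiv \left(y^{p^{k-1}}+1\right)^p \pmod{p^2}$ and Frobenius, obtaining $\overline{G_2}=H_1(x^s)^{p^{k-1}}$ with $H_1(y)=\left((y+1)^p-y^p-1\right)/p$; your four factorizations of $H_1$ agree with \eqref{Eq:G2}, and your descent to the variable $y=x^s$ (legitimate since $y\mapsto y^s$ is surjective on $\overline{\F}_p$) makes the root analysis independent of $s$ and $k$, with the order-of-$\omega$ argument replacing the paper's case split $t=3z+r$. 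More significantly, your converse is more careful than the paper's, which records it only as ``similar to Corollary \ref{Cor:Len}'': that analogy covers just the primes $p\nmid m$, and for $p\mid \gcd(D,m)$ statement \eqref{JKS:C5Main} is unavailable, so one genuinely needs your observation that $D\equiv t^t-(1-t)^{t-1}\pmod{p^2}$ together with the finite check that this is never $0$ modulo $p^2$ for $p\in\{2,3,5,7\}$ --- precisely the verification $C(t)\not\equiv 0 \pmod{p^2}$ that the paper performs only later, inside the proof of Theorem \ref{second_type}. You leave that check as a promissory periodic computation (as does the paper), and you additionally prove the ``Moreover'' discriminant claim, which the paper's proof leaves implicit; both points are fine as written.
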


\begin{proof}
Assume first that $A$ and $D$ are squarefree. Note that $f(x)$ is Eisenstein with respect to any prime divisor of $A$, which implies that $f(x)$ is irreducible. To establish that $f(x)$ is monogenic, we verify that all the statements in Theorem \ref{Thm:JKS1Main} are true. Statement \eqref{JKS:C1Main} is clearly true since $A$ is squarefree, while statements \eqref{JKS:C2Main} and \eqref{JKS:C3Main} are vacuously true.

To address statement \eqref{JKS:C5Main}, let $p$ be a prime divisor of
\[
\Delta(f)=(-1)^{tm(tm-1)/2}A^{tm-1}m^{tm}\left(t^t+A(1-t)^{t-1}\right)^m,
\]
such that $p\nmid Am$. Thus, $p\mid t^t+A(1-t)^{t-1}$. Then
\[
t^t A^{t-1} + (1-t)^{t-1} A^t=A^{t-1}\left(t^t+A(1-t)^{t-1}\right)\not \equiv 0 \pmod{p^2},
\]
since $t^t+A(1-t)^{t-1}$ is squarefree, and therefore statement \eqref{JKS:C5Main} is true.

Finally, we verify that statement \eqref{JKS:C4Main} is true. For any prime $p$ such that $p\nmid A$ and $p\mid m$, we write $m=sp^k$ and $s^{\prime}=tm/p^k=ts$, where $p\nmid \gcd(s,s^{\prime})$. We need to show that the polynomials
\[
G_1:=x^{ts}+Ax^s+A \quad \mbox{and} \quad G_2:=\dfrac{Ax^{sp^k}+A+\left(-Ax^s-A\right)^{p^k}}{p}
\]
are coprime modulo $p$. Expanding $G_2$ using the binomial theorem and rearranging, we get that
\[
G_2=\left(\frac{A+(-A)^{p^k}}{p}\right)\left(x^{sp^k}+1\right)+(-A)^{p^k}\frac{\sum_{j=1}^{p^k-1}\binom{p^k}{j}x^{s(p^k-j)}}{p}.
\]
Since $A\equiv -1 \pmod{p^2}$, a straightforward calculation yields
\[
\dfrac{A-A^{p^k}}{p}\equiv 0 \pmod{p} \quad  \mbox{and} \quad (-A)^{p^k}\equiv 1 \pmod{p}.
\]
Hence, we write
\begin{equation}
\label{Eq:G1}
\overline{G_1}=x^{ts}-x^s-1 \quad \mbox{and} \quad \overline{G_2}=\frac{\sum_{j=1}^{p^k-1}\binom{p^k}{j}x^{s(p^k-j)}}{p},
\end{equation}
where $\overline{*}$ is a convenient reduction since further arithmetic will take place modulo $p$. Of course, here we have that $p\in \{2,3,5,7\}$. A further analysis reveals, for $j\in \{1,2,\ldots ,p^k-1\}$, that
\[
\frac{\binom{2^k}{j}}{2}\equiv \left\{
    \begin{array}{cl}
      1 \pmod{2} & \mbox{if $j=2^{k-1}$}\\
      0 \pmod{2} & \mbox{otherwise,}\\
    \end{array}\right.\quad \frac{\binom{3^k}{j}}{3}\equiv \left\{
    \begin{array}{cl}
    1 \pmod{3} & \mbox{if $j\in \{3^{k-1},2\cdot 3^{k-1}\}$}\\
    0 \pmod{3} & \mbox{otherwise,}
    \end{array}\right.\]
    \[\frac{\binom{5^k}{j}}{5}\equiv \left\{
    \begin{array}{cl}
    1 \pmod{5} & \mbox{if $j\in \{5^{k-1},4\cdot 5^{k-1}\}$}\\
    2 \pmod{5} & \mbox{if $j\in \{2\cdot 5^{k-1},3\cdot 5^{k-1}\}$}\\
    0 \pmod{5} & \mbox{otherwise}\\
    \end{array}\right. \quad \mbox{and}\]
    \[\frac{\binom{7^k}{j}}{7}\equiv \left\{
    \begin{array}{cl}
    1 \pmod{7} & \mbox{if $j\in \{7^{k-1},6\cdot 7^{k-1}\}$}\\
    3 \pmod{7} & \mbox{if $j\in \{2\cdot 7^{k-1},5\cdot 7^{k-1}\}$}\\
    5 \pmod{7} & \mbox{if $j\in \{3\cdot 7^{k-1},4\cdot 7^{k-1}\}$}\\
    0 \pmod{7} & \mbox{otherwise.}\\
    \end{array}\right.
\]
Hence,
\begin{equation}
\label{Eq:G2}
\overline{G_2}=\left\{\begin{array}{cl}
      x^{s2^{k-1}} & \mbox{if $p=2$}\\
      x^{s3^{k-1}}\left(x^{s}+1\right)^{3^{k-1}} &  \mbox{if $p=3$}\\
      x^{s5^{k-1}}\left(x^s+1\right)^{5^{k-1}}\left(x^{2s}+x^s+1\right)^{5^{k-1}} &  \mbox{if $p=5$}\\
      x^{s7^{k-1}}\left(x^s+1\right)^{7^{k-1}}\left(x^{2s}+x^s+1\right)^{2\cdot 7^{k-1}} &  \mbox{if $p=7$.}
    \end{array}\right.
\end{equation}
Let $h(x)=\gcd\left(\overline{G_1},\overline{G_2}\right)$. Suppose that $h(x) \neq 1$ and let $h(\alpha)=0$ for $\alpha$ in an algebraic closure of $\Z / p\Z$. Thus, we see from \eqref{Eq:G2} that either $\alpha=0$, $\alpha^s+1=0$ or $\alpha^{2s}+\alpha^s+1=0$. But from \eqref{Eq:G1} we deduce that
\[
0=\overline{G_1}(\alpha)=\left\{\begin{array}{cl}
        -1 & \mbox{if $\alpha=0$}\\
        (-1)^t & \mbox{if $\alpha^s+1=0$,}
        \end{array} \right.
\]
which is impossible in any case. So, suppose that
\begin{equation}
\label{Eq:alpha}
\alpha^{2s}+\alpha^s+1=0,
\end{equation}
which implies that $\alpha^{3s}=1$. Write $t=3z+r$, where $r\in \{0,1,2\}$. Then
\begin{equation}
\label{Eq:alpha2}
0=\overline{G_1}(\alpha)=\alpha^{(3z+r)s}-\alpha^s-1=\alpha^{rs}-\alpha^s-1.
\end{equation}
If $r=0$, we see from \eqref{Eq:alpha2} that $\alpha=0$, which is impossible. If $r=1$, then, from \eqref{Eq:alpha2}, we arrive at the impossibility that $0=-1$. Finally, if $r=2$, then we have from \eqref{Eq:alpha2} that
\[
\alpha^{2s}-\alpha^s-1=0,
\]
which, combined with \eqref{Eq:alpha}, implies that $2\alpha^{2s}=0$. By \eqref{Eq:G2}, one may deduce $\alpha=0$, which by \eqref{Eq:alpha2} is again found to be impossible. It follows that $h(x)=1$, which proves that $f(x)$ is monogenic. The converse is straightforward and follows by an argument similar to the one at the end of the proof of Corollary \ref{Cor:Len}.
\end{proof}
\noindent
Computer evidence suggests that the factorization of $G_2$ modulo $p$ in Proposition \ref{Prop:Len2} becomes unwieldy for $p>7$, both with respect to $\kappa$, and the residue classes of the coefficients, preventing our reasonable attempts at a generalization.

Quartic trinomials of the form $x^4+Ax^2+B$ have previously been investigated in \cite{HSW,Kable} for integral and power bases. Note that when $B=A\equiv -1 \pmod{4}$, Proposition \ref{Prop:Len2} provides, in the more general setting of $\deg(f)=2^{k+1}$, the simple criterion that $f(x)=x^{2^{k+1}}+Ax^{2^k}+A$, where $k\ge 1$, $A\ge 3$ with $A\equiv -1 \pmod{4}$, is monogenic if and only if $A$ and $(A-4)$ are squarefree, or equivalently (since $\gcd(A,A-4)=1$), that $A(A-4)$ is squarefree.


For $n\ge 4$, the situation when $\kappa = 2$ and $A \equiv 1 \pmod{\kappa^2}$, which is handled by the same methods used in the proof of Proposition \ref{Prop:Len2}, is stated precisely in the following proposition. 

\begin{prop}
\label{kappa2}
Let $n \ge 4$ be an integer and $m = 2^k \ge 2$ be a proper divisor of $n$. Suppose that $f(x)=x^{n}+Ax^m+A$, where $A\ge 5$ is an integer such that $A \equiv 1 \pmod{4}$. Let $t=n/m$ and $D$ be as defined in \eqref{Eq:D}. Then $f(x)$ is monogenic if and only if $A$ and $D$ are squarefree and $t \not \equiv 2 \pmod{3}$. Moreover, $\Delta(f)$ is not squarefree.
\end{prop}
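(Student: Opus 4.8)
The plan is to follow the architecture of the proof of Proposition~\ref{Prop:Len2}, verifying the five statements of Theorem~\ref{Thm:JKS1Main} for each prime divisor $p$ of the discriminant
\[
\Delta(f)=(-1)^{tm(tm-1)/2}A^{tm-1}m^{tm}\left(t^t+A(1-t)^{t-1}\right)^m
\]
supplied by Theorem~\ref{Thm:polydisc}, where $D=t^t+(1-t)^{t-1}A$. Assuming $A$ and $D$ are squarefree, $f(x)$ is Eisenstein at any prime dividing $A$ and hence irreducible. Statement~\eqref{JKS:C1Main} holds because $A$ is squarefree; statements~\eqref{JKS:C2Main} and \eqref{JKS:C3Main} are vacuous since $A=B$ forces $p\mid A\iff p\mid B$; and statement~\eqref{JKS:C5Main} follows exactly as in Proposition~\ref{Prop:Len2}, because a prime $p\nmid Am$ dividing $\Delta(f)$ must divide $D$, whence $p^2\nmid A^{t-1}D$. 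The entire weight of the proof thus falls on statement~\eqref{JKS:C4Main}.

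Since $m=2^k$, the only prime with $p\nmid A$ and $p\mid m$ is $p=2$; writing $m=s\cdot 2^k$ gives $s=1$ and $s'=t$, so the polynomials to be shown coprime modulo $2$ are $G_1=x^t+Ax+A$ and $G_2=\bigl(Ax^{2^k}+A+(-Ax-A)^{2^k}\bigr)/2$. Reducing modulo $2$ and invoking the binomial identities recorded in the proof of Proposition~\ref{Prop:Len2} gives $\overline{G_1}=x^t+x+1$. The decisive difference from Proposition~\ref{Prop:Len2} is that here $A\equiv 1\pmod 4$, so that $(A+A^{2^k})/2\equiv 1\pmod 2$ rather than $\equiv 0$; consequently the term $\bigl((A+A^{2^k})/2\bigr)(x^{2^k}+1)$ no longer vanishes, and one obtains
\[
\overline{G_2}=x^{2^k}+x^{2^{k-1}}+1=\bigl(x^{2^{k-1}}\bigr)^2+x^{2^{k-1}}+1.
\]
I would then argue that a common root $\alpha$ in an algebraic closure of $\Z/2\Z$ must satisfy $\alpha^{2^{k-1}}\in\{\omega,\omega^2\}$ for a primitive cube root of unity $\omega$; applying the injectivity of the Frobenius to $\alpha^3$ forces $\alpha^3=1$ with $\alpha\ne 1$, i.e.\ $\alpha^2+\alpha+1=0$. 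Writing $t=3z+r$ with $r\in\{0,1,2\}$, a direct evaluation gives $\overline{G_1}(\alpha)=\alpha,\,1,\,0$ according as $r=0,1,2$, so $G_1$ and $G_2$ are coprime modulo $2$ precisely when $t\not\equiv 2\pmod 3$. This establishes the forward implication, and because $2\mid\Delta(f)$ always makes statement~\eqref{JKS:C4Main} applicable at $p=2$, it simultaneously shows that monogeneity forces $t\not\equiv 2\pmod 3$.

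For the converse, monogeneity forces every statement of Theorem~\ref{Thm:JKS1Main}: statement~\eqref{JKS:C1Main} at a prime dividing $A$ yields $A$ squarefree, the squarefreeness of $D$ follows from statements~\eqref{JKS:C5Main} and \eqref{JKS:C1Main} by the argument closing the proof of Corollary~\ref{Cor:Len}, and $t\not\equiv 2\pmod 3$ is the content of the previous paragraph. Finally, the ``moreover'' assertion is unconditional: in $\Delta(f)=\pm A^{tm-1}m^{tm}D^m$ the factor $m^{tm}=2^{ktm}$ has exponent $ktm\ge 2$ since $m=2^k\ge 2$ and $t\ge 2$, so $4\mid\Delta(f)$. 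The main obstacle is the modulo-$2$ analysis of $\overline{G_2}$: correctly detecting how the hypothesis $A\equiv 1\pmod 4$ (rather than $A\equiv -1\pmod 4$ as in Proposition~\ref{Prop:Len2}) revives the cube-root-of-unity roots of $\overline{G_2}$ is exactly what produces the additional congruence condition $t\not\equiv 2\pmod 3$.
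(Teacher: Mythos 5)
Your proposal is correct and takes essentially the same approach as the paper's proof: both reduce the problem to the five statements of Theorem \ref{Thm:JKS1Main}, with all the real work at statement \eqref{JKS:C4Main} for $p=2$, where $\overline{G_1}=x^t+x+1$ and $\overline{G_2}=x^{2^k}+x^{2^{k-1}}+1=(x^2+x+1)^{2^{k-1}}$, so that coprimality modulo $2$ is governed by the primitive cube roots of unity and fails exactly when $t\equiv 2\pmod{3}$. Your Frobenius-injectivity derivation of the roots of $\overline{G_2}$ and the case analysis $t=3z+r$ are only cosmetic variants of the paper's factorization of $\overline{G_2}$ and its order-of-$\alpha$ computation.
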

\begin{proof} For
 the trinomial \[f(x)=x^{t2^k}+Ax^{2^k}+A,\mbox{ where $k\ge 1$ and $t\ge 2$},\] we determine necessary and sufficient conditions so that each statement of Theorem \ref{Thm:JKS1Main} is true. Here we have that
\[\Delta(f)=(-1)^{t2^{k-1}(t2^k-1)}A^{t2^k-1}2^{tk2^k}D^{2^k},\] where $D=t^t+(1-t)^{t-1}A$.   Let $\P$ be the set of prime divisors of $\Delta(f)$.
Statement \eqref{JKS:C1Main} is true for all $p\in \P$ if and only if $A$ is squarefree. Note that, in this case, $f(x)$ is Eisenstein with respect to any prime divisor of $A$, which implies that $f(x)$ is irreducible.  Statements \eqref{JKS:C2Main} and \eqref{JKS:C3Main} are vacuously true for all $p\in \P$, while statement \eqref{JKS:C5Main} is true for all $p\in \P$ if and only if $D$ is squarefree. We turn finally to statement \eqref{JKS:C4Main}, where we see that \[p=2, \quad s^{\prime}=t \quad \mbox{and} \quad s=1.\] Then, the two polynomials appearing in statement \eqref{JKS:C4Main} are $G_1=x^t+Ax+A$ and
\begin{align*}
G_2&=\dfrac{Ax^{2^k}+A+(-Ax-A)^{2^k}}{2}\\
&=\left(\frac{A^{2^k}+A}{2}\right)x^{2^k}+A^{2^k}\sum_{j=1}^{2^k-1}\frac{{2^k\choose j}}{2}x^j+\frac{A^{2^k}+A}{2}.
\end{align*}
Note that ${2^k\choose j}\not \equiv 0 \pmod{4}$ if and only if $j=2^{k-1}$. Thus, since $A\equiv 1 \pmod{4}$, we have that
\[\overline{G_1}=x^t+x+1\quad \mbox{and}\quad \overline{G_2}=(x^2+x+1)^{2^{k-1}},\] where $\overline{G_i}$ is the reduction of $G_i$ modulo 2. Let $h(x)=\gcd(\overline{G_1},\overline{G_2})$, and suppose that $h(\alpha)=0$ for $\alpha$ in some algebraic closure of $\Z / 2\Z$. Then, since  $\overline{G_2}(\alpha)=0$, we see that $\alpha^3-1=(\alpha-1)(\alpha^2+\alpha+1)=0$, so that the order of $\alpha$ is 3. Hence,
\begin{align*}
  \gcd(\overline{G_1},\overline{G_2})\ne 1 & \Longleftrightarrow G_1(\alpha)=0\\
  & \Longleftrightarrow \alpha^t+\alpha+1=0\\
  & \Longleftrightarrow \alpha^t+\alpha^2=0\\
  & \Longleftrightarrow \alpha^{t-2}+1=0\\
  & \Longleftrightarrow t\equiv 2 \pmod{3},
\end{align*}
which completes the proof.
\end{proof}
We then have the following immediate corollary of Proposition \ref{kappa2}.
\begin{cor}\label{Cor:kappa2}
 Let $k$ and  $A$ be integers such that $k\ge 1$ and $A\ge 5$, with $A\equiv 1 \pmod{4}$. Then $f(x)=x^{2^{k+1}}+Ax^{2^k}+A$ is not monogenic.
\end{cor}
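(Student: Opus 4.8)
The plan is to apply Proposition \ref{kappa2} with the specialization $t = 2$, since the trinomial $f(x) = x^{2^{k+1}} + Ax^{2^k} + A$ has degree $n = 2^{k+1}$, exponent $m = 2^k$, and hence $t = n/m = 2$. First I would check that the hypotheses of Proposition \ref{kappa2} are satisfied: we have $n = 2^{k+1} \ge 4$ because $k \ge 1$, and $m = 2^k \ge 2$ is a proper divisor of $n$ since $t = 2 > 1$; moreover $A \ge 5$ with $A \equiv 1 \pmod{4}$ is exactly the standing hypothesis. So Proposition \ref{kappa2} applies verbatim.

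The crux is then to observe that one of the necessary conditions for monogeneity in Proposition \ref{kappa2} simply fails. That proposition asserts that $f(x)$ is monogenic if and only if $A$ and $D$ are squarefree \emph{and} $t \not\equiv 2 \pmod{3}$. But here $t = 2$, and $2 \equiv 2 \pmod 3$, so the condition $t \not\equiv 2 \pmod 3$ is violated. Therefore the ``only if'' direction of the equivalence forces $f(x)$ to be non-monogenic, regardless of the values of $A$ and $D$. This is really a one-line deduction once the specialization $t=2$ is made explicit.

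I expect there to be essentially no obstacle here, as the corollary is a direct substitution into the preceding proposition. The only point requiring a moment's care is confirming that the arithmetic condition $t \equiv 2 \pmod 3$ is exactly the obstruction identified in the proof of Proposition \ref{kappa2} (namely the final chain of equivalences showing $\gcd(\overline{G_1}, \overline{G_2}) \ne 1 \Longleftrightarrow t \equiv 2 \pmod 3$), which guarantees that statement \eqref{JKS:C4Main} of Theorem \ref{Thm:JKS1Main} fails and hence some prime divides the index $[\OK_K : \Z[\theta]]$. Since that work has already been done in the proposition, the proof of the corollary reduces to noting $t = 2 \equiv 2 \pmod 3$ and invoking the proposition.
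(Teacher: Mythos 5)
Your proof is correct and is exactly the paper's argument: the paper's own proof of Corollary \ref{Cor:kappa2} likewise just notes that $t=2$ (so $t\equiv 2 \pmod{3}$) and invokes the ``only if'' direction of Proposition \ref{kappa2}. Your write-up merely spells out the verification of the hypotheses in more detail.
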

\begin{proof}
  Since $t=2$, it follows from Proposition \ref{kappa2} that $f(x)$ is not monogenic.
\end{proof}

\subsection{Asymptotic for Theorems \ref{first_type} and \ref{second_type}} \label{sec_asymp}

The next result is an asymptotic that allows us to count, for a fixed integer $n\ge 2$ and a proper divisor $m\ge 1$ of $n$, the number of monogenic trinomials of the form $f(x)=x^n+Ax^m+A$ with $A \le X$, under certain restrictions on $A$. Specifically, this asymptotic will be used in the proofs of Theorem \ref{first_type} and Theorem \ref{second_type}.

Consider an integer $\beta$ and positive integers $\rho$, $\gamma$, $\alpha$, $\alpha_0$, and $\beta_0$ that satisfy the following.
\begin{equation}
\label{Eq:DanRestrictions}
\boxed{
\begin{array}{rl}
1. & \gcd(\alpha_0 \beta_0  \rho,\gamma)=1=\gcd(\alpha,\beta)\\
2. & \mbox{for each prime $p \mid \beta$, we also have $p^2 \mid \beta$}\\
3. & \alpha_0 \mbox{ is a squarefree divisor of $\alpha$}\\
4. & \beta_0 \mbox{ is a squarefree divisor of $\beta$}\\
5. & \alpha\beta_0 \rho + \beta \not \equiv 0 \Mod{p^2} \mbox{ for every $p \mid \gamma$}\\
\end{array}
}
\end{equation}

We wish to count the number of positive, squarefree integers $y \le X$ which are congruent to $\rho$ modulo $\gamma^2$ with $\gcd(y,\alpha_0 \beta_0)=1$ such that $\alpha \beta_0 y + \beta$ is also squarefree. Precisely, we define the function
\begin{multline}\label{Eq:U}
U(X;\rho,\gamma,\alpha, \alpha_0, \beta, \beta_0):=\\
 \left| \left\{  y \le X : y \equiv \rho \, (\text{mod } \gamma^2), \, \gcd(y,\alpha_0 \beta_0)=1,\, \mu(y) \neq 0, \, \mu(\alpha \beta_0 y+\beta) \neq 0 \right\} \right|.
\end{multline}
Since the context will always be clear, we simply refer to the function in \eqref{Eq:U} as $U(X)$, where the dependence on the other variables is implicit.

\begin{thm} \label{mainasymp} Given the restrictions on the variables $\rho$, $\gamma$, $\alpha$, $\beta$, $\alpha_0$ and $\beta_0$ described in \eqref{Eq:DanRestrictions}, we have
\[
U(X) = X \left( \frac{ \varphi(\alpha_0 \beta_0)}{\alpha_0 \beta_0 \gamma^2 \zeta(2)} \right) \prod_{p \mid \alpha_0 \beta_0 \gamma} \left( 1 - \frac{1}{p^2} \right)^{-1}  \prod_{p \nmid \alpha \beta \gamma} \left( 1 - \frac{1}{p^2-1} \right) + O\left( X^{3/4} \right),
\]
where the implied constant is dependent on $\gamma$, $\alpha$, $\alpha_0$, $\beta$, and $\beta_0$.
\end{thm}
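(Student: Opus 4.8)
The plan is to sieve for the condition that $\alpha\beta_0 y + \beta$ is squarefree by expanding with the Möbius function, while counting the squarefree values of $y$ itself (in the resulting arithmetic progression) directly via Corollary \ref{Cor:Prachar}. Writing $g(y) = \alpha\beta_0 y + \beta$ and using $\mu^2(g(y)) = \sum_{b^2 \mid g(y)} \mu(b)$, interchange the order of summation to obtain
\[
U(X) = \sum_{b \ge 1} \mu(b)\, S_b(X), \qquad S_b(X) = \#\{y \le X : y \equiv \rho \Mod{\gamma^2},\ \gcd(y,\alpha_0\beta_0)=1,\ \mu(y)\neq 0,\ b^2 \mid g(y)\}.
\]
Since $b^2 \mid g(y)$ forces $b \ll X^{1/2}$, the $b$-sum is finite, and the task reduces to understanding $S_b(X)$ for squarefree $b$.

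The crucial first step, and the place where the hypotheses \eqref{Eq:DanRestrictions} are consumed, is to show that $S_b(X) = 0$ unless $\gcd(b,\alpha\beta\gamma)=1$. I would argue prime by prime: suppose $p \mid b$ and $S_b(X) \neq 0$. If $p \mid \gamma$, then $y \equiv \rho \Mod{p^2}$ together with $p^2 \mid g(y)$ forces $\alpha\beta_0\rho + \beta \equiv 0 \Mod{p^2}$, contradicting condition $5$. If $p \mid \alpha$, then $g(y) \equiv \beta \not\equiv 0 \Mod{p}$ since $\gcd(\alpha,\beta)=1$, so $p^2 \nmid g(y)$, a contradiction. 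If $p \mid \beta$, then condition $2$ gives $p^2 \mid \beta$, so $p^2 \mid g(y)$ forces $\alpha\beta_0 y \equiv 0 \Mod{p^2}$; because $\beta_0$ is squarefree (condition $4$) and $p \nmid \alpha$, this forces either $p^2 \mid y$ (contradicting $\mu(y) \neq 0$) or $p \mid y$ with $p \mid \beta_0 \mid \alpha_0\beta_0$ (contradicting $\gcd(y,\alpha_0\beta_0)=1$). Hence only $b$ coprime to $\alpha\beta\gamma$ survive. For such $b$ the moduli $b^2$ and $\gamma^2$ are coprime, the congruence $b^2 \mid g(y)$ cuts out a single class modulo $b^2$ coprime to $b$ (as $\gcd(\beta,b)=1$), and CRT merges it with $y \equiv \rho \Mod{\gamma^2}$ into one class $y \equiv c_b \Mod{b^2\gamma^2}$ with $\gcd(c_b,b^2\gamma^2)=1$. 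Applying Corollary \ref{Cor:Prachar} with $r = c_b$, $m = b^2\gamma^2$, and $q = \alpha_0\beta_0$ (valid since $\gcd(\alpha_0\beta_0,b^2\gamma^2)=1$) then gives $S_b(X)$ as an explicit main term plus an error $O(\alpha_0\beta_0\, X^{1/2})$ that is uniform in $b$.

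It then remains to assemble the main term and balance the errors. Summing the Prachar main terms over all $b$ with $\gcd(b,\alpha\beta\gamma)=1$ factors as an Euler product; the factor $\prod_{p \mid \alpha_0\beta_0\gamma\, b}(1-p^{-2})^{-1}$ splits along $b$, and the resulting $b$-sum collapses to
\[
\sum_{\gcd(b,\alpha\beta\gamma)=1} \frac{\mu(b)}{b^2}\prod_{p \mid b}\left(1-p^{-2}\right)^{-1} = \prod_{p \nmid \alpha\beta\gamma}\left(1 - \frac{1}{p^2-1}\right),
\]
so that the total main term is exactly $X\bigl(\varphi(\alpha_0\beta_0)/(\alpha_0\beta_0\gamma^2\zeta(2))\bigr)\prod_{p\mid\alpha_0\beta_0\gamma}(1-p^{-2})^{-1}\prod_{p\nmid\alpha\beta\gamma}(1-\tfrac{1}{p^2-1})$, matching the claim. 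For the error, I would truncate the $b$-sum at $b \le Z$: the discarded tail of the convergent main series is $O(X/Z)$, the accumulated Prachar errors over $b \le Z$ total $O(\alpha_0\beta_0\, Z X^{1/2})$, and the tail $\sum_{b>Z}\lvert S_b(X)\rvert \le \sum_{b>Z}(X/b^2 + 1) = O(X/Z + X^{1/2})$. Choosing $Z \asymp X^{1/4}$ balances $ZX^{1/2}$ against $X/Z$ and produces the stated error $O(X^{3/4})$, with the implied constant depending on $\gamma,\alpha,\alpha_0,\beta,\beta_0$.

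The main obstacle is the vanishing step in the second paragraph: all five conditions in \eqref{Eq:DanRestrictions} (especially the ``powerful'' condition on $\beta$, the squarefreeness of $\beta_0$, and the noncongruence condition $5$) are precisely calibrated so that $S_b(X)=0$ for every $b$ sharing a factor with $\alpha\beta\gamma$. Once this is established the computation is a routine squarefree-sieve assembly, but tracking the compatibility and coprimality of the merged congruence class carefully, so that Corollary \ref{Cor:Prachar} applies verbatim, is where the care is required.
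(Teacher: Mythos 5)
Your proposal is correct and follows essentially the same route as the paper: a M\"obius sieve over the square divisors of $\alpha\beta_0 y+\beta$, reduction to counting squarefree $y$ in arithmetic progressions via Corollary \ref{Cor:Prachar}, completion of the resulting multiplicative sum into the Euler product $\prod_{p\nmid\alpha\beta\gamma}\left(1-\frac{1}{p^2-1}\right)$, and truncation at $X^{1/4}$ to balance the errors into $O\left(X^{3/4}\right)$. The only (cosmetic) difference is that you eliminate the moduli $b$ sharing a factor with $\alpha\beta\gamma$ up front, whereas the paper carries an indicator $\xi(d)$ and $\lcm\left(\gamma^2,d^2\right)$ moduli and evaluates $\xi(p)$ prime by prime at the Euler-product stage.
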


\begin{proof}
For brevity, we let $F(X) = \alpha \beta_0 X + \beta$. A standard method for detecting squarefree integers will be employed here. By the well-known identity $\sum_{d \mid n} \mu(d) = \delta(n,1)$, we have
\begin{align}
U(X) &= \sum_{\substack{y \le X \\ \mu(y) \neq 0 \\ y \equiv \rho \Mod{\gamma^2} \\
\gcd(y,\alpha_0 \beta_0)=1}} \sum_{d^2 \mid \alpha \beta_0 y + \beta} \mu(d) = \sum_{\substack{y \le X \\ d \le \sqrt{F(y)}}} \mu(d) \sum_{e \le F(y)/d^2} \chi\left(d^2e\right) \label{swaporder} 
\end{align}
where
\[
\chi(n) :=
    \left\{
     \begin{array}{ll}
       1 & \text{if } \frac{n-\beta}{\alpha \beta_0} \equiv \rho \Mod{\gamma^2} \text{ is a squarefree positive integer co-prime to } \alpha_0 \beta_0; \\
       0 & \text{otherwise}.
     \end{array}
   \right.
\]
We first set out to understand exactly when $y = \frac{d^2 e - \beta}{\alpha \beta_0}$ is a positive integer. Therefore we examine the congruence
\begin{align}
d^2 e - \beta \equiv 0 \Mod{\alpha \beta_0}, \label{Dancong}
\end{align}
which has solutions in $e$ only when $\gcd(d,\alpha)=1$ since $\gcd(\alpha,\beta)=1$. Furthermore, the condition $\gcd(y,\beta_0)=1$, coupled with our goal to detect squarefree values of $y$, means we may restrict to the situation where $\gcd(d,\beta)=1$ by the hypotheses on $\beta$ and $\beta_0$. Under these considerations, (\ref{Dancong}) has a unique solution in $e$. It follows that the positive integer solutions to (\ref{Dancong}) are of the form $e = e_0 + s \alpha \beta_0$ for some minimal $e_0 > 0$ and $s \ge 0$. For these positive integers, we observe
\[
y =\frac{d^2 (e_0 + s \alpha \beta_0) - \beta}{\alpha \beta_0} = d^2 s + \frac{d^2 e_0 - \beta}{\alpha \beta_0} \equiv - \alpha^{-1} (\beta / \beta_0) \Mod{d^2}.
\]
These observations, together with (\ref{swaporder}), give
\begin{align}
U(X) &= \sum_{\substack{d \le \sqrt{F(X)} \\ \gcd(d,\alpha \beta)=1}} \mu(d) \xi(d) S \left(X;R_d,\lcm\left(\gamma^2,d^2 \right), \alpha_0 \beta_0 \right) + O\left( \sqrt{X} \right) \nonumber\\
& = \sum_{\substack{d \le X_0 \\ \gcd(d,\alpha \beta)=1}} \mu(d) \xi(d) S \left(X;R_d,\lcm\left(\gamma^2,d^2 \right), \alpha_0 \beta_0 \right) + O\left( \frac{X}{X_0} \right) \label{beforeasymp}
\end{align}
where $X_0 \le \sqrt{F(X)}$ is dependent on $X$ and chosen later, $\xi(d)$ is the indicator function for whether or not the congruence classes
\begin{equation}
- \alpha^{-1} (\beta / \beta_0) \Mod{d^2} \quad \text{and} \quad \rho \Mod{\gamma^2} \label{congs}
\end{equation}
have non-empty intersection, and $R_d$ is the residue modulo $\lcm(\gamma^2,d^2)$ that results upon use of the Chinese remainder theorem on the congruences in ($\ref{congs}$) if $\xi(d)=1$ where $R_d$ is arbitrary otherwise.

Now we may insert the estimate from Corollary \ref{Cor:Prachar} into (\ref{beforeasymp}). Let $U_M(X)$ denote the quantity that arises from the insertion of the  main term of Corollary \ref{Cor:Prachar} into (\ref{beforeasymp}), and let $U_E(X) := U(X) - U_M(X)$ represent the rest. We bound $U_E(X)$ by
\begin{align}
\sum_{\substack{d \le X_0 \\ \gcd(d,\alpha \beta)=1}} \mu(d) \xi(d) O \left( X^{1/2} \right) + O\left( \frac{X}{X_0} \right) = O \left( X_0 X^{1/2} + \frac{X}{X_0} \right) \label{errorbound}
\end{align}
so that only analyzing $U_M(X)$ remains. Note that

\begin{align}
U_M(X) &= \sum_{\substack{d \le X_0 \\ \gcd(d,\alpha \beta)=1}} \mu(d) \xi(d) \frac{ \varphi(\alpha_0 \beta_0) X}{(\alpha_0 \beta_0) \lcm\left(\gamma^2,d^2 \right) \zeta(2)} \prod_{p \mid \alpha_0 \beta_0 \lcm(\gamma^2,d^2)} \left( 1 - \frac{1}{p^2} \right)^{-1} \nonumber \\
\begin{split}
&= \frac{\varphi(\alpha_0 \beta_0) X}{\alpha_0 \beta_0 \gamma^2 \zeta(2)} \prod_{p \mid \alpha_0 \beta_0 \gamma} \left( 1 - \frac{1}{p^2} \right)^{-1}\\ & \qquad \qquad \qquad \times \sum_{\substack{d \le X_0 \\ \gcd(d,\alpha \beta)=1}} \frac{\mu(d)}{d^2} \xi(d) \gcd\left(\gamma^2,d^2 \right) \prod_{\substack{p \nmid \gamma \\ p \mid d}} \left( 1 - \frac{1}{p^2} \right)^{-1}. \label{mult}
\end{split}
\end{align}

We have arranged the expression in (\ref{mult}) so that the summands are multiplicative with respect to $d$. This enables us to complete the sum over all $\gcd(d,\alpha \beta)=1$ and factor it into an Euler product, up to some manageable error. By (\ref{mult}) and bounding the tail of the completed sum trivially,
\begin{multline*}
U_M(X) = \frac{\varphi(\alpha_0 \beta_0) X}{\alpha_0 \beta_0 \gamma^2 \zeta(2)} \prod_{p \mid \alpha_0 \beta_0 \gamma} \left( 1 - \frac{1}{p^2} \right)^{-1} \prod_{\substack{p \mid \gamma \\ p \nmid \alpha \beta}} \left( 1 - \xi(p) \right)\\
 \times \prod_{\substack{p \nmid \gamma \\ p \nmid \alpha \beta}} \left( 1 + \frac{\xi(p)}{p^2} \left( 1 - \frac{1}{p^2} \right)^{-1} \right) + O \left( \frac{X}{X_0} \right)
 \end{multline*}
where one clearly obtains $\xi(p) = 1$ for $p \nmid \gamma$. In consideration of $\xi(p)$ for the case $p \mid \gamma$, we may consider the subcases $p \nmid \beta_0$ and $p \mid \beta_0$. For the former, the hypothesis $\alpha \beta_0 \rho + \beta \not \equiv 0 \, (\text{mod } p^2)$ prevents the congruences in (\ref{congs}) from having non-empty intersection, so $\xi(p)= 0$. For the latter, $\xi(p)=1$ would imply $p \mid \rho$ by the hypothesis on $\beta$, contradicting $\gcd(\rho, \gamma)=1$. Thus,
\begin{equation}
U_M(X) = \frac{\varphi(\alpha_0 \beta_0) X}{\alpha_0 \beta_0 \gamma^2 \zeta(2)} \prod_{p \mid \alpha_0 \beta_0 \gamma} \left( 1 - \frac{1}{p^2} \right)^{-1}  \prod_{p \nmid \alpha \beta \gamma} \left( 1 - \frac{1}{p^2-1} \right) + O \left( \frac{X}{X_0} \right). \label{mainterm}
\end{equation}
In consideration of $U(X) = U_M(X) + U_E(X)$ and lines (\ref{errorbound}) and (\ref{mainterm}) with the choice $X_0 = X^{1/4}$, the proof of Theorem \ref{mainasymp} is complete.
\end{proof}

\section{Proofs of Main Results}
\label{main_results}

We now apply the asymptotic results developed in Section \ref{sec_asymp} to the families of monogenic trinomials identified in Section \ref{sec_identify} to prove Theorems \ref{first_type}, \ref{second_type} and \ref{general_type}. We end with a proof of Proposition \ref{galois_prop}, showing these families can have relatively small Galois groups.

\subsection{The proof of Theorem \ref{first_type}}
\begin{proof}
Since $A\equiv  0\pmod{\kappa}$ and $A$ is squarefree, we can write $A=a\kappa$, where $\gcd(a,\kappa)=1$. We let $t=n/m$ and
\[
\begin{array}{lclcl}
    \gamma=\rho=1& \qquad \quad & \beta_0=\gcd(t,\kappa) & \qquad \quad & y=a\\
    \beta=(-1)^{t-1}t^t & & \alpha_0=\kappa/\beta_0 && \alpha=(t-1)^{t-1}\alpha_0.\\
\end{array}
\]
It is easy to check that these variables satisfy conditions \eqref{Eq:DanRestrictions}. We let
\[
\widehat{D}:=\alpha\beta_0y+\beta=(t-1)^{t-1}\kappa a+(-1)^{t-1}t^t=(t-1)^{t-1}A+(-1)^{t-1}t^t,
\]
so that $\widehat{D}=(-1)^{t-1}D$. We apply Theorem \ref{mainasymp} to deduce the growth rate of the number of squarefree positive integers $A\equiv 0 \pmod{\kappa}$, such that $D$ is also squarefree. By Corollary \ref{Cor:Len}, the proof of the theorem is complete.
\end{proof}

In Table \ref{Table:2}, we provide some examples of the actual count of the number of monogenic trinomials $f(x)=x^n+Ax^m+A$, with $1 \le A \le X=10000$ satisfying the hypotheses of Corollary \ref{Cor:Len}, versus the main term of \eqref{first_asymp} rounded to the nearest integer. The actual counts in Table \ref{Table:2} were determined using Maple 9.5.
\begin{table}[h]
\begin{center}
\begin{tabular}{ccccc}
$n$ & $m$ & Actual count & Main term \\ \hline
24 & 12 & 460 & 461\\
19 & 1 & 5549 & 5548\\
14 & 7 & 624 & 618 \\
12 & 3  & 1380 & 1383 \\
8 & 4  & 1617 & 1614\\
\end{tabular}
\end{center}
\caption{Number of monogenic trinomials versus main term of \eqref{first_asymp}}
\label{Table:2}
\end{table}


\subsection{The proof of Theorem \ref{second_type}}
\begin{proof}
We let $t=n/m$ and
\[\begin{array}{lclcl}
    \gamma=\kappa& \qquad \quad & \alpha_0=\beta_0=1 & \qquad  \quad & y=A\\
    \beta=(-1)^{t-1}t^t & &  \rho=\kappa^2-1 && \alpha=(t-1)^{t-1}.\\
\end{array}
\]
It is easy to check that these variables satisfy the first four conditions in \eqref{Eq:DanRestrictions}. To verify that the last condition in \eqref{Eq:DanRestrictions} is satisfied, we must show that
\[
C(t):=(t-1)^{t-1}\left(\kappa^2-1\right)+(-1)^{t-1}t^t\not \equiv 0 \pmod{p^2},
\]
for every prime $p$ dividing $\kappa$. Since it is easy to see that
\[
C\left(t+p^2(p-1)\right)\equiv C(t) \pmod{p^2},
\]
we conclude that the sequence $C(t)$ is periodic modulo $p^2$ with period at most $p^2(p-1)$. Note that here we have that $p\in \{ 2,3,5,7\}$. It is then a simple calculation to establish, for each of these primes $p$, that $C(t)\not \equiv 0 \pmod{p^2}$ for any integer $t\ge 2$. Hence, all conditions in   \eqref{Eq:DanRestrictions} are satisfied. We let
\[
\widehat{D}:=\alpha\beta_0y+\beta=(t-1)^{t-1}A+(-1)^{t-1}t^t,
\]
so that $\widehat{D}=(-1)^{t-1}D$. We apply Theorem \ref{mainasymp} to deduce the growth rate of the number of squarefree positive integers $A\equiv -1 \pmod{\kappa^2}$, such that $D$ is also squarefree. By Proposition \ref{Prop:Len2}, the proof of the theorem is complete.
\end{proof}

In Table \ref{Table:3}, we provide some examples of the actual count of monogenic trinomials $f(x)=x^n+Ax^m+A$, with $1 \le A \le X=10000$ satisfying the hypotheses of Proposition \ref{Prop:Len2}, versus the main term of \eqref{second_asymp} rounded to the nearest integer. The actual counts in Table \ref{Table:3} were determined using Maple 9.5.
\begin{table}[ht]
\begin{center}
\begin{tabular}{ccccc}
$n$ & $m$ & Actual count & Main term \\ \hline
24 & 12 & 232 & 231 \\
14 & 7 & 102 & 103\\
12 & 3  & 688 & 691\\
8 & 4  & 1619 & 1614\\
\end{tabular}
\end{center}
\caption{Number of monogenic trinomials versus main term of \eqref{second_asymp}}
\label{Table:3}
\end{table}\\


\subsection{The proof of Theorem \ref{general_type}}
\begin{proof}
Let $a=A/r\kappa$ and consider the polynomial
\[
F(x)=t^tx^{t-1}+(1-t)^{t-1}a^tr\kappa\in \Z[x].
\]
We wish to apply Corollary \ref{Cor:Squarefree} to $F(x)$. To ensure we are not in a trivial situation, note first that $F(x)$ has no repeated zeros. We also need to verify that $F(x)$ has no local obstructions. That is, for each prime $q$, we must show that there exists some $z\in \left(\Z/q^2\Z\right)^{*}$
  such that $F(z)\not \equiv 0 \pmod{q^2}$.

  Suppose first that $t\equiv 0 \pmod{q}$. Then, either
  \begin{equation}\label{Eq:Either}
  (1-t)^{t-1}a^tr\kappa\not \equiv 0 \pmod{q} \qquad \mbox{or} \qquad (1-t)^{t-1}a^tr\kappa\equiv 0 \pmod{q}.
  \end{equation} Clearly, in the first case of \eqref{Eq:Either}, we have that $(1-t)^{t-1}a^tr\kappa\not \equiv 0 \pmod{q^2}$. In the second case of \eqref{Eq:Either}, we must have that $r\kappa\equiv 0 \pmod{q}$ since $\gcd((1-t)a,t)=1$. But $r\kappa\not \equiv 0 \pmod{q^2}$ since $r\kappa$ is squarefree. Hence, in the second case of \eqref{Eq:Either} as well, we have that $(1-t)^{t-1}a^tr\kappa\not \equiv 0 \pmod{q^2}$. Therefore, since $t\ge 2$, it follows for either case of \eqref{Eq:Either} that
  \[F(z)\equiv (1-t)^{t-1}a^tr\kappa\not \equiv 0 \pmod{q^2} \quad \mbox{for any $z\in \left(\Z/q^2\Z\right)^{*}$.}\]

  Suppose now that $t\not \equiv 0 \pmod{q}$. We again have the two possibilities of \eqref{Eq:Either}. In the second case of \eqref{Eq:Either}, if
  $(1-t)^{t-1}a^tr\kappa \equiv 0 \pmod{q^2}$, then we easily observe that
  \[F(1)\equiv t^t\not\equiv 0 \pmod{q^2}.\] On the other hand, if $(1-t)^{t-1}a^tr\kappa \not \equiv 0 \pmod{q^2}$, then we also claim that
   \[F(1)=t^t+(1-t)^{t-1}a^tr\kappa\not\equiv 0 \pmod{q^2}.\]
   To see this, assume to the contrary that $F(1)\equiv 0 \pmod{q^2}$. Then
   \[qt^t\equiv qt^t+q(1-t)^{t-1}a^tr\kappa\equiv qF(1)\equiv 0 \pmod{q^2},\]
   which contradicts the fact that $t\not \equiv 0 \pmod{q}$.

   Finally, we turn to the first possibility in \eqref{Eq:Either}, still under the assumption that $t\not \equiv 0 \pmod{q}$. In this situation, we see that $t-1\not \equiv 0\pmod{q}$. Let $z\in \left(\Z/q^2\Z\right)^{*}$. If $F(z)\not \equiv 0 \pmod{q^2}$, then we are done. So, suppose that $F(z)\equiv 0 \pmod{q^2}$. Let $\rho\in \left(\Z/q^2\Z\right)^{*}$ be a primitive root modulo $q^2$. If $F(z\rho)\equiv 0 \pmod{q^2}$, then
   \[F(z\rho)-F(z)\equiv t^t(z\rho)^{t-1}-t^tz^{t-1}\equiv t^tz^{t-1}\left(\rho^{t-1}-1\right)\equiv 0 \pmod{q^2},\]
   from which it follows that
   \begin{equation}\label{Eq:PR}
   \rho^{t-1}-1\equiv 0 \pmod{q^2}.
   \end{equation} However, \eqref{Eq:PR} contradicts the fact that $\rho$ is a primitive root modulo $q^2$ since $\ord_{q^2}(\rho)=q(q-1)$ and $t-1\not \equiv 0 \pmod{q}$. Hence, $F(z\rho)\not \equiv 0 \pmod{q^2}$ and $F(x)$ has no local obstructions.

Thus, by Corollary \ref{Cor:Squarefree}, there exist infinitely many primes $p$ such that $F(p)$ is squarefree, where the existence of the primes $p$ is unconditional when $2\le t\le 4$, and conditional on the $abc$-conjecture for number fields when $t\ge 5$. If, for any of these primes $p>A$, we let $B=pr\kappa$, then clearly $B$ is squarefree, and
\begin{align*}
\dfrac{t^tB^{t-1}+(1-t)^{t-1}A^t}{\gcd(A,B)^{t-1}}&=\dfrac{t^tp^{t-1}r^{t-1}\kappa^{t-1}+(1-t)^{t-1}a^tr^t\kappa^t}{r^{t-1}\kappa^{t-1}}\\
&=t^tp^{t-1}+(1-t)^{t-1}a^tr\kappa\\
&=F(p)
\end{align*}
is also squarefree. Hence, by Lemma \ref{Lem:Len}, $f(x)$ is monogenic, completing the proof of Theorem \ref{general_type}.
\end{proof}

\subsection{The proof of Proposition \ref{galois_prop}}

\begin{proof}
Suppose that $f(\theta)=0$, and let $K=\Q(\theta)$.
Then, factoring $f(x)$ over $K$, we get
\[
f(x)=g(x) \left(x^{m}-\theta^{m}\right)\\
 =g(x) \prod_{d\mid m} \theta^{\phi(d)}\Phi_{d}\left(\dfrac{x}{\theta}\right) \quad \text{where} \quad g(x)=A+\sum_{j=0}^{t-1}\theta^{jm}x^{(t-j-1)m}
\]
and $\Phi_d(x)$ denotes the $d$th cyclotomic polynomial.
Thus, the $m$ zeros of $f(x)/g(x)$ are precisely
\[
\theta,\, \theta\zeta,\, \theta\zeta^{2},\,\ldots ,\,\theta\zeta^{m-1},
\]
where $\zeta$ is a primitive $m$th root of unity, and $\left[\Q(\theta,\zeta):\Q\right]=\phi(m)tm$. If $g(x)$ is irreducible over $\Q(\theta,\zeta)$, and $g(\eta)=0$, then
\[
\left[\Q(\eta,\theta,\zeta):\Q(\theta,\zeta)\right]=(t-1)m.
\] Continuing, we see that no more roots of unity are required to split $f(x)$ completely and, by induction, we conclude that the ``worst-case" scenario is
\[
[L:\Q]=\phi(m)\prod_{j=0}^{t-1}(t-j)m=\phi(m) m^t t!,
\]
where $L$ is a splitting field over $\Q$ for $f(x)$.
\end{proof}

\section*{Acknowledgments}
The authors thank the referee for the helpful suggestions, including an enhancement to Table \ref{Table:1}, and Remark \ref{Rem:Ref}.

\end{document}